\newcommand{\PP}{\mathbb{P}}
\newcommand{\gG}{\mathbb{G}}
\newcommand{\OO}{\mathcal{O}}
\newcommand{\FF}{\mathcal{F}}
\newcommand{\uU}{\mathcal{U}}
\newcommand{\CC}{\mathbb{C}}
\newcommand{\yY}{\mathcal{Y}}
\newcommand{\xX}{\mathcal{X}}
\newcommand{\ev}{\operatorname{ev}}
\newcommand{\codim}{\operatorname{codim}}
\newcommand{\bd}{\underline{d}}
\newcommand{\mult}{\operatorname{mult}}
\newtheorem{theorem}{Theorem}[section]
\newtheorem{proposition}[theorem]{Proposition}
\newtheorem{corollary}[theorem]{Corollary}
\newtheorem{conjecture}[theorem]{Conjecture}
\theoremstyle{remark}
\author{Eric Riedl and David Yang}
\title{Applications of a grassmannian technique in hypersurfaces}
\begin{document}
\maketitle

\abstract{In this paper we further develop a Grassmannian technique used to prove results about very general hypersurfaces and present three applications. First, we provide a short proof of the Kobayashi Conjecture given previous results on the Green-Griffiths-Lang Conjecture. Second, we characterize the dimension of the space of Chow-equivalent points on a very general hypersurface, proving the remaining cases of a conjecture of Chen, Lewis and Sheng and providing a short, alternate proof for many of the already known cases. Finally, we relate Seshadri constants of very general points to Seshadri constants of arbitrary points of very general hypersurfaces.}
%If $\Gamma$ is a fixed smooth projective curve with two fixed points $0$ and $\infty$, and let $R_{\Gamma, X, p} = \{ q \in X | Nq \sim_{\Gamma} Np \}$. The goal of this document is to prove all but the $2n - \sum_{i=1}^k (d_i - 1) = -1$ case of the following conjecture of Chen, Lewis and Sheng \cite{CLS}, in the situation where $\Gamma = \PP^1$.

\section{Introduction}
In this paper, we further develop a technique from \cite{RY} and apply it to study the Kobayashi Conjecture, $0$-cycles on hypersurfaces of general type, and Seshadri constants of very general hypersurfaces. The idea of the technique is to translate results about very general points on very general hypersurfaces to results about arbitrary points on very general hypersurfaces.

Our first application is to hyperbolicity. Recall that a complex variety is Brody hyperbolic if it admits no holomorphic maps from $\CC$. 

\begin{conjecture}[Kobayashi Conjecture]
\label{conj-Kobayashi}
A very general hypersurface $X$ of degree $d$ in $\PP^n$ is Brody hyperbolic if $d$ is sufficiently large. Moreover, the complement $\PP^n \setminus X$ is also Brody hyperbolic for large enough $d$.
\end{conjecture}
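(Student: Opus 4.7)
My plan is to combine previously established results on the Green-Griffiths-Lang conjecture with the Grassmannian technique from \cite{RY} to upgrade algebraic degeneracy of entire curves to full Brody hyperbolicity. As input, I take a result of the following form: for $d \geq d_0(n)$, every very general hypersurface $X \subset \PP^n$ of degree $d$ admits a proper closed algebraic subvariety $Y_X \subsetneq X$, of degree bounded in terms of $n$ and $d$, such that every entire holomorphic map $f \colon \CC \to X$ has image contained in $Y_X$. Statements of this shape are available from work of Siu, Demailly, Diverio--Merker--Rousseau, Brotbek, and others.

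The uniform bound on $\deg Y_X$ lets me assemble the $Y_X$ into a closed subvariety $\yY$ of the universal hypersurface $\cC \to B$, where $B$ parametrizes degree $d$ hypersurfaces in $\PP^n$, with the property that for very general $b \in B$ every entire curve on $X_b$ has image in $\yY_b := \yY \cap X_b \subsetneq X_b$. The heart of the argument is then to apply the Grassmannian principle from \cite{RY}: if a constructible condition on pairs $(X_b, p) \in \cC$ is satisfied for very general pairs, then for very general $b$ it holds at every point $p \in X_b$. I would apply this to the condition ``$p \notin \yY_b$'', which is automatic at very general pairs because $\yY_b$ is a proper subvariety of $X_b$. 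The principle then forces $\yY_b = \emptyset$ for very general $b$, so $X_b$ carries no non-constant entire curve, i.e., $X_b$ is Brody hyperbolic.

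For the complement $\PP^n \setminus X$, I would run the identical argument after replacing the GGL input by its logarithmic analogue for the pair $(\PP^n, X)$ (work of Darondeau and Brotbek--Deng, among others), producing an algebraic bad locus in the universal log pair and then applying the Grassmannian principle to the complementary condition on points.

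The main obstacle I anticipate is the algebraization step: the exceptional loci $Y_X$ coming out of the GGL conclusion must assemble into an \emph{algebraic} family over $B$, which is why a version of GGL with uniform Hilbert-scheme control on the exceptional locus is essential as input. Once that is in place, the Grassmannian machinery of \cite{RY} carries out precisely the very-general-to-arbitrary point transfer needed to eliminate the bad locus in every fiber, and Brody hyperbolicity follows.
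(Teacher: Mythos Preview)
Your proposal misstates the Grassmannian technique in a way that makes the argument collapse. The principle you invoke---``if a constructible condition on pairs $(X_b,p)$ holds for very general pairs, then for very general $b$ it holds at every $p\in X_b$''---is simply false as a general statement (e.g., ``$p$ lies on no line of $X$'' holds at very general points of a very general cubic surface but not at all points). The actual technique, Theorem~\ref{thm-technicaltool} in the paper, is a statement across \emph{varying ambient dimension}: one needs a compatible family of bad loci $Z_{n,\bd}\subset \xX_{n,\bd}$ for all $n$, satisfying the hereditary condition that if $(p,X_0)\in Z_{n,\bd}$ and $(p,X_0)$ is a hyperplane section of $(p,X)$, then $(p,X)\in Z_{n+1,\bd}$. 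From codimension $\geq 1$ in ambient dimension $m-1$ one deduces codimension $\geq c$ in ambient dimension $m-c$.

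This has two consequences for your plan. First, the hereditary condition is the real content, and your choice of bad locus does not obviously satisfy it: if $Y_{X_0}$ is the GGL exceptional locus of a hyperplane section $X_0\subset X$, there is no reason its image should lie in $Y_X$. The paper avoids this by taking the bad locus to be the jet-theoretic one $\ev(S_{k,m}(X))$; functoriality of the jet bundles $E_{k,m}^{GG}T_X^*$ under the inclusion $X_0\hookrightarrow X$ is exactly what makes Condition~(1) hold. Your ``uniform degree bound'' on $Y_X$ addresses constructibility but not heredity. Second, even once the technique is applied correctly, you do not get $\yY_b=\emptyset$: starting from codimension $1$ in $\xX_{2n-3,d}$ yields codimension $n-2$ in $\xX_{n,d}$, so the bad fiber is at most a curve. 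The paper then needs Voisin's theorem (every curve on $X$ is of general type when $d\geq 2n-1$) to rule out entire curves landing in that residual locus. Your outline omits this endgame entirely.
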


First conjectured in 1970 \cite{K}, the Kobayashi Conjecture has been the subject of intense study, especially in recent years \cite{Siu, Deng, Brotbeck, D2}. The suspected optimal bound for $d$ is approximately $d \geq 2n-1$. However, the best current bound is for $d$ greater than about $(en)^{2n+2}$ \cite{D2}. A related conjecture is the Green-Griffiths-Lang Conjecture.

\begin{conjecture}[Green-Griffiths-Lang Conjecture]
\label{conj-GGL}
If $X$ is a variety of general type, then there is a proper subvariety $Y \subset X$ containing all the entire curves of $X$.
\end{conjecture}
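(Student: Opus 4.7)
The Green-Griffiths-Lang Conjecture in the stated generality is a well-known open problem, so any proposal can only outline a program; the framework of Green-Griffiths, Siu, and Demailly reduces the problem to producing enough global jet differentials, and I would work inside that framework.

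First, I would let $X$ be a smooth projective variety of general type of dimension $n$ and consider Demailly's bundle $E_{k,m}\Omega_X$ of invariant jet differentials of order $k$ and weighted degree $m$. The fundamental vanishing theorem of Green-Griffiths, Siu, and Demailly asserts that if $\omega \in H^0(X, E_{k,m}\Omega_X \otimes A^{-1})$ is a section twisted by the inverse of an ample line bundle $A$, then the pullback of $\omega$ along any entire curve $f\colon \CC \to X$ vanishes identically. Hence, if the common zero locus $Y$ of all such $\omega$ (as $k$ and $m$ vary) is a proper subvariety of $X$, then every entire curve is contained in $Y$, which is exactly the statement we want.

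The second step is therefore to produce such sections and to control their common base locus. For $k \geq n$ and $m \to \infty$, Demailly's holomorphic Morse inequalities on the Demailly-Semple tower give a lower bound on the Euler characteristic of $E_{k,m}\Omega_X \otimes A^{-1}$, and, under positivity hypotheses on a suitable twist of $T_X^\vee$, on $h^0$ itself. The aim is to combine this with the hypothesis that $K_X$ is big to force $h^0(E_{k,m}\Omega_X \otimes A^{-1}) > 0$ and to show the base locus shrinks to a proper subvariety as $m \to \infty$.

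The hard part, and precisely why the conjecture remains open, is this positivity analysis on the Demailly-Semple tower: bigness of $K_X$ alone is far too weak to push the Morse inequalities through for arbitrary $X$, and even nontriviality of $H^0(X, E_{k,m}\Omega_X)$ for a generic $n$-dimensional variety of general type is not known. I would therefore expect to retreat to special classes such as hypersurfaces or complete intersections, where the Grassmannian technique further developed in the present paper is likely to be most useful, rather than attempt the conjecture in full generality.
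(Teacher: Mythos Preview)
The statement you were asked to prove is labeled \emph{Conjecture} in the paper, and the paper does not attempt a proof of it; the Green--Griffiths--Lang Conjecture is used only as motivation and, in its hypersurface incarnation via Demailly's bounds, as an input to the Grassmannian argument for Kobayashi hyperbolicity. So there is no ``paper's own proof'' to compare your proposal against.

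That said, your write-up is honest and accurate about the status of the problem: you correctly identify it as open in the stated generality, you describe the Green--Griffiths--Siu--Demailly jet-differential framework and the fundamental vanishing theorem, and you pinpoint the genuine obstruction, namely that bigness of $K_X$ is not known to produce enough sections of $E_{k,m}\Omega_X\otimes A^{-1}$ with controlled base locus. Your instinct to retreat to hypersurfaces is exactly what the paper does, but note that even there the paper does not prove Green--Griffiths--Lang; it \emph{assumes} Demailly's effective bound (Theorem~3.2 in the paper) that $\yY_{n,d}$ has codimension at least $1$ for $d\ge d_n$, and then uses the Grassmannian technique of Theorem~\ref{thm-technicaltool} to bootstrap codimension and deduce Kobayashi hyperbolicity. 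So the paper's contribution is orthogonal to the program you sketch: it shows how to pass from Green--Griffiths--Lang--type input to Kobayashi--type output, not how to establish the input itself.
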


The Green-Griffiths-Lang Conjecture says that holomorphic images of $\CC$ under nonconstant maps do not pass through a general point of $X$. Conjecture \ref{conj-GGL} is well-studied for general hypersurfaces, and it is a natural result to prove on the way to proving Conjecture \ref{conj-Kobayashi}. We provide a new proof of the Kobayashi Conjecture using previous results on the Green-Griffiths-Lang Conjecture.

\begin{theorem}
A general hypersurface in $\PP^n$ of degree $d$ admits no nonconstant holomorphic maps from $\CC$ for $d \geq d_{2n-3}$, where $d_2 = 286, d_3 = 7316$ and
\[ d_n = \left\lfloor \frac{n^4}{3} (n \log(n \log(24n)))^n \right\rfloor .\]
\end{theorem}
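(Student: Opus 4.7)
The plan is to reduce this Kobayashi-type conclusion on $\PP^n$ to the Green--Griffiths--Lang Conjecture on $\PP^{2n-3}$ via the Grassmannian technique of \cite{RY}. The bounds $d_2 = 286$, $d_3 = 7316$, and the stated formula for $d_n$ coincide with the effective Green--Griffiths--Lang bounds of Merker--Darondeau type, which assert that a very general hypersurface $Y \subset \PP^N$ of degree $d \geq d_N$ admits a proper subvariety $Z \subsetneq Y$ containing every nonconstant entire curve $\CC \to Y$. I will take this as input. Specializing to $N = 2n-3$ and $d \geq d_{2n-3}$ then tells us that no nonconstant entire curve in such a very general $Y$ passes through a very general point of $Y$.

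The central step will be to apply the Grassmannian principle from \cite{RY} to promote ``very general point on a very general hypersurface in $\PP^{2n-3}$'' to ``arbitrary point on a very general hypersurface in $\PP^n$.'' I will let $X \subset \PP^n$ be a very general hypersurface of degree $d \geq d_{2n-3}$ and $p \in X$ an arbitrary point, and then consider the incidence of pairs $(\iota, Y)$, where $\iota : \PP^n \hookrightarrow \PP^{2n-3}$ is a linear embedding and $Y \subset \PP^{2n-3}$ is a degree $d$ hypersurface such that $\iota^{-1}(Y) = X$. A dimension count inside the universal family over $\gG(n, 2n-3) \times \PP(H^0(\OO_{\PP^{2n-3}}(d)))$ should show that for a very general $X$ and \emph{any} $p \in X$, one can choose $(\iota, Y)$ so that $Y$ is very general and $\iota(p)$ is very general on $Y$. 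This is precisely the content of the \cite{RY} technique, and the cutoff $N = 2n-3$ will emerge from the relevant dimension comparison.

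Assembling the pieces gives a quick contradiction: if there were a nonconstant holomorphic $f : \CC \to X$ with $p \in f(\CC)$, then $\iota \circ f : \CC \to Y$ would be a nonconstant entire curve in $Y$ through $\iota(p)$. By the Green--Griffiths--Lang input applied to $Y$, its image would lie in a proper subvariety $Z \subsetneq Y$, forcing $\iota(p) \in Z$ and contradicting the very-general choice of $\iota(p)$ on $Y$. The hard part will be the dimension-count certification that makes the \cite{RY} principle transfer the analytic property ``is traced by a nonconstant entire curve'' rather than a purely algebraic one; I expect this to go through by working with the algebraic exceptional locus $Z$ produced by GGL, which is stable under the deformations considered and can be fed directly into the Grassmannian incidence argument, with the degree bound $d_{2n-3}$ inherited untouched from the input.
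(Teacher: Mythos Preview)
Your strategy matches the paper's in spirit---feed the GGL input on $\PP^{2n-3}$ into the Grassmannian technique---but two steps are missing or mis-stated.

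First, the claim that for very general $X$ and \emph{any} $p\in X$ one can choose $(\iota,Y)$ with $\iota(p)$ very general on $Y$ is circular as written. The ``very general'' condition you need on $(\iota(p),Y)$ is exactly ``no bad nonsingular $k$-jet through the point,'' and this property is hereditary upward: if $(p,X)$ carries a nonsingular $k$-jet annihilated by no section of $E^{GG}_{k,m}T_X^*\otimes\OO(-H)$, then so does every $(\iota(p),Y)$ extending it, because jet differentials on $Y$ restrict to jet differentials on $X$. So when $p$ is bad in $X$, \emph{no} choice of $(\iota,Y)$ makes $\iota(p)$ good in $Y$, and your contradiction never fires. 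What the technique actually delivers is a codimension bound: taking $\yY_{m,d}\subset\xX_{m,d}$ to be the constructible jet-differential base locus (this is the precise algebraic substitute you gesture at in your last sentence), one gets $\codim(\yY_{n,d}\subset\xX_{n,d})\geq (2n-3)-n+1=n-2$ from $\codim(\yY_{2n-3,d}\subset\xX_{2n-3,d})\geq 1$.

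Second---and this is the step you omit entirely---codimension $n-2$ only forces the fiber $\yY_{n,d}\cap X$ over a general $X$ to have dimension at most $1$; it does not make it empty. So the argument so far merely confines every entire curve in $X$ to an algebraic curve. The paper closes the gap by invoking Voisin's theorem that for $d\geq 2n-1$ every algebraic curve on a very general degree-$d$ hypersurface in $\PP^n$ is of general type, hence contains no entire curve. Without this last step your proof does not conclude; if you wanted to avoid Voisin and argue purely from the codimension bound, you would need the GGL input on $\PP^{2n-1}$ rather than $\PP^{2n-3}$, yielding only the weaker bound $d_{2n-1}$.
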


Our proof appears to be substantially simpler than the previous proofs (compare with \cite{Siu, Deng, Brotbeck, D2}), and can be adapted in a straightforward way as others use jet bundles to obtain better bounds for Conjecture \ref{conj-GGL}. Unfortunately, the bound of about $(2n \log(n\log(n)))^{2n+1}$ that we obtain is slightly worse than Demailly's bound of $(en)^{2n+2}$. However, assuming the optimal result on the Green-Griffiths-Lang Conjecture, our technique allows us to prove the conjectured bound of $d \geq 2n-1$ for the Kobayashi Conjecture.

The Kobayashi Conjecture for complements of hypersurfaces has also been studied by several authors \cite{Dar, BD}. Using results of Darondeau \cite{Dar} along with our Grassmannian technique, we are able to prove the Kobayashi Conjecture for complements as well.

\begin{theorem}
If $X$ is a general hypersurface in $\PP^n$ of degree at least $d_{2n}$, where $d_n = (5n)^2 n^n$, then $\PP^n \setminus X$ is Brody hyperbolic.
\end{theorem}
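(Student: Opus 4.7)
The plan is to mirror the proof strategy of the previous theorem, but with Darondeau's hyperbolicity theorem for complements \cite{Dar} playing the role that the Green-Griffiths-Lang Conjecture played there. Darondeau's result gives, for a very general hypersurface $Y \subset \PP^{2n}$ of degree $d \geq d_{2n}$, that every nonconstant holomorphic map $\CC \to \PP^{2n} \setminus Y$ is algebraically degenerate, and in particular misses a very general point of $\PP^{2n}$. This is precisely the ``very general point of a very general hypersurface'' type of input that the Grassmannian technique of \cite{RY} is designed to amplify to an ``every point'' statement.

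I would then argue by contradiction. Assume $X \subset \PP^n$ is a general hypersurface of degree $d \geq d_{2n}$ and that there is a nonconstant entire curve $f : \CC \to \PP^n \setminus X$; let $p$ be any point in its image. I would set up the incidence variety parametrizing triples $(Y, q, g)$ with $Y$ a degree-$d$ hypersurface in $\PP^{2n}$, $q \in \PP^{2n} \setminus Y$, and $g : \CC \to \PP^{2n} \setminus Y$ a nonconstant entire curve with $g(0) = q$, and compare it with the analogous incidence variety in $\PP^n$. The Grassmannian technique, applied via a suitable family of linear embeddings $\PP^n \hookrightarrow \PP^{2n}$ (equivalently, cones over $X$ from complementary subspaces), should produce from the existence of $(X, p, f)$ a triple in the $\PP^{2n}$ incidence variety with $(Y, q)$ very general; this directly contradicts Darondeau's theorem applied to $Y$.

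The main obstacle will be adapting the Grassmannian technique to the complement setting. In \cite{RY} the technique transports the property ``entire curve lies on $X$'' from very general to arbitrary points, whereas here one must transport the property ``entire curve avoids $X$''. Concretely, one needs to verify that the parameter space built from the Grassmannian of linear subspaces through $p$ (together with the corresponding cones) remains irreducible and dominates the target universal family, so that specializing the Grassmannian parameter yields a very general pair $(Y, q)$ whose complement still contains a nonconstant entire curve through $q$. Ensuring that the ``avoids the hypersurface'' condition is preserved after specialization---rather than being lost by the curve drifting onto the cone $Y$---is the technical crux of the argument.
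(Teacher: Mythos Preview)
Your proposal has a genuine gap at its foundation. The Grassmannian technique of \cite{RY}, as formalized in Theorem~\ref{thm-technicaltool} and Corollary~\ref{GrassmanCor}, is a statement about codimensions of \emph{algebraic} (or countable unions of locally closed) loci. The ``incidence variety'' you propose, parametrizing triples $(Y,q,g)$ with $g$ a nonconstant entire curve, is not an algebraic object: holomorphic maps from $\CC$ do not form an algebraic family, and the set of pairs $(q,Y)$ admitting such a $g$ through $q$ is not known to be constructible. This is precisely the difficulty that the whole jet-differential machinery is designed to circumvent. Without an algebraic locus to which one can attach a codimension, the inductive step via Corollary~\ref{GrassmanCor} cannot even be formulated.

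The paper repairs this exactly as in the compact case: it replaces ``there is an entire curve in $\PP^n\setminus X$ through $p$'' by the algebraic proxy ``there is a nonsingular $k$-jet at $p$ annihilated by every section of $E_{k,m}T_{\PP^n}^*(\log X)\otimes\OO(-H)$''. This defines a genuine locally closed locus $Z_{n,d}$, and Darondeau's theorem is invoked in that form (Theorem~\ref{thm-Darondeau}): the image $\ev(S_{k,m})$ has codimension at least $2$, so $Z_{n,d}$ has codimension at least $1$. Functoriality of logarithmic jet differentials under linear inclusions gives hypothesis~(1) of Theorem~\ref{thm-technicaltool}, and the codimension then cascades. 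Your worry about the curve ``drifting onto the cone $Y$'' disappears once one works with jets rather than curves: the relevant condition is the vanishing of sections on a single jet, which is manifestly preserved under the inclusion $\PP^{r-c-1}\hookrightarrow\PP^{r-c}$.

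A secondary issue: your ``cone'' picture misreads the technique. One does not take a fixed bad pair $(X,p)$ and try to realize it as a section of a \emph{very general} $(Y,q)$ --- the $Y$'s containing a fixed $X$ form a proper family and cannot be very general. Rather, one chooses a single high-dimensional $(p,Y)$ admitting \emph{both} the bad pair and a very general good pair as linear sections, and then runs the codimension induction inside the Grassmannian of linear subspaces of $\PP^N$ through $p$.
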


Our bound of about $100 \cdot 2^n n^{2n+2}$ is slightly worse than Brotbeck and Deng's bound of about $e^3 n^{2n+6}$ \cite{BD}, but our proof is substantially shorter.

%The Kobayashi Conjecture and related conjectures about hyperbolicity have been a subject of much study, so much so that it is beyond the scope of this paper to fully describe all the many past advances. See the survey article by Demailly \cite{D2} for more details. While we still use difficult results about the Green-Griffiths-Lang Conjecture proved by others, our proof of the Kobayashi Conjecture from the Green-Griffiths-Lang Conjecture appears much simpler than current techniques, and gives similar (albeit slightly weaker) bounds to those already known. Furthermore, our technique is likely to be useful in a straightforward way as bounds for the Green-Griffiths-Lang on general hypersurfaces are improved. Given the ideal (and conjectured) Green-Griffiths-Lang result, our technique would allow us to prove optimal or nearly optimal results on the Kobayashi Conjecture. 

Our second application concerns the Chow equivalence of points on very general complete intersections. Chen, Lewis, and Sheng \cite{CLS} make the following conjecture, which is inspired by work of Voisin \cite{voisinChow, V2,V3}. %For simplicity, we state only the hypersurface case here and defer the discussion of complete intersections to section \ref{sec-cycles}.

\begin{conjecture}
\label{conj-CLS}
Let $X \subset \PP^n$ be a very general complete intersection of multidegree $(d_1, \dots, d_k)$. Then for every $p \in X$, the space of points of $X$ rationally equivalent to $p$ has dimension at most $2n-k-\sum_{i=1}^k d_i$. If $2n-k-\sum_{i=1}^k d_i < 0$, we understand this to mean that $p$ is equivalent to no other points of $X$.
\end{conjecture}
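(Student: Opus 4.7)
The plan is to reduce Conjecture \ref{conj-CLS} to a ``very general pointed pair'' version and then upgrade to arbitrary points via the Grassmannian technique of \cite{RY}. Write $r = 2n - k - \sum_{i=1}^k d_i$, let $B$ parametrize complete intersections of multidegree $(d_1, \dots, d_k)$ in $\PP^n$, and for a pair $(X, p)$ with $p \in X$ let $R_X(p) \subseteq X$ denote the Chow-equivalence class of $p$ in $X$.

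First, I would establish the conjecture in the weaker form: for a very general pair $(X, p)$ in the universal pointed family $\mathcal{V} = \{(X, p) : p \in X\} \to B$, $\dim R_X(p) \leq r$. Most cases are already in \cite{CLS}; the remaining ones should follow from a Mumford-Roitman style dimension count on the universal incidence
\[ \mathcal{W}_{r+1} = \{(X, Z) : Z \subset X \text{ irreducible with } \dim Z = r+1 \text{ and all points of } Z \text{ Chow-equivalent in } X\}, \]
showing this locus has codimension in the ambient universal family large enough that its projection to $B$ cannot be dominant.

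Next, to upgrade to arbitrary $p \in X$, I would apply the Grassmannian technique. Suppose for contradiction some very general $X_0 \in B$ and some $p_0 \in X_0$ satisfy $\dim R_{X_0}(p_0) > r$. Consider the incidence
\[ \mathcal{T} = \{(X, \Lambda) : \Lambda \cong \PP^m \subset \PP^n \text{ and } X \cap \Lambda \text{ is a c.i.\ of the same multidegree}\} \]
with $m$ chosen so that $2m - k - \sum d_i$ is close to zero. The restriction map $(X, \Lambda) \mapsto X \cap \Lambda$ should be dominant onto the parameter space of c.i.'s in $\PP^m$, and the key input from \cite{RY} is that given $(X_0, p_0)$ with $X_0$ very general, one can find a $\Lambda \ni p_0$ making $(X_0 \cap \Lambda, p_0)$ very general as a pointed c.i.\ in $\PP^m$. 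The baseline applied in $\PP^m$ then yields $\dim R_{X_0 \cap \Lambda}(p_0) \leq 2m - k - \sum d_i$, and combined with a transversality argument for $R_{X_0}(p_0) \cap \Lambda$ should recover $\dim R_{X_0}(p_0) \leq r$, the desired contradiction.

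The main obstacle is this final transfer step: rational equivalence is an intrinsic notion, and cutting $X_0$ by $\Lambda$ gives automatically only the inclusion $R_{X_0 \cap \Lambda}(p_0) \subseteq R_{X_0}(p_0) \cap \Lambda$, which runs in the wrong direction for a direct dimension comparison. Surmounting this will require either a Bloch-Srinivas-style decomposition of the diagonal realizing rational equivalences in $X_0$ between points of $\Lambda$ by cycles supported in $X_0 \cap \Lambda$ for suitably chosen $\Lambda$, or a one-parameter specialization through a family of linear subspaces in the Grassmannian that bounds the family of rational curves witnessing the equivalences. This transfer is precisely where the technical force of the Grassmannian technique of \cite{RY} becomes indispensable, and where I expect most of the work to concentrate.
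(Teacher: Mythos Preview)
Your plan runs the Grassmannian technique in the wrong direction, and the ``main obstacle'' you flag is not a technicality to be overcome but the reason your route cannot work. The paper never cuts $X_0\subset\PP^n$ by a smaller $\Lambda\cong\PP^m$ and tries to compare $R_{X_0\cap\Lambda}(p_0)$ with $R_{X_0}(p_0)\cap\Lambda$; as you observe, rational equivalence only transfers along the inclusion $X_0\cap\Lambda\hookrightarrow X_0$, i.e.\ $R_{X_0\cap\Lambda}(p_0)\subseteq R_{X_0}(p_0)$. The paper uses exactly this easy direction. The bad loci are $E_{n,\bd}=\{(p,X):\dim R_{X}(p)\ge 1\}$ and $G_{n,\bd}=\{(p,X):R_{X}(p)\ne\{p\}\}$, and the one-line observation ``$p\sim q$ in $Y\Rightarrow p\sim q$ in $Y'$ for $Y\subset Y'$'' shows that if $(p,X)\in E_{n,\bd}$ is a hyperplane section of $(p,X')$ then $(p,X')\in E_{n+1,\bd}$. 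This is precisely hypothesis~(1) of Theorem~\ref{thm-technicaltool}, and it runs from \emph{lower} ambient dimension to \emph{higher}.

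Accordingly the base case is placed at the \emph{top}, not the bottom: Roitman's theorem gives $\codim E_{m-1,\bd}\ge 1$ at the Calabi--Yau level $m-1=\sum d_i-1$, and the Grassmannian proposition then forces $\codim E_{m-c,\bd}\ge c$, i.e.\ $\codim E_{n,\bd}\ge \sum d_i-n$ for all smaller $n$. Intersecting with a very general fiber over $B$ yields $\dim\{p\in X:\dim R_X(p)\ge 1\}\le 2n-k-\sum d_i$, and since $R_X(p)$ is contained in this locus whenever it is positive-dimensional, the conjectured bound follows for $2n-k-\sum d_i\ge 0$; the same argument with $G_{n,\bd}$ handles $2n-k-\sum d_i\le -2$. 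No Bloch--Srinivas decomposition, no specialization of families of rational curves, and no attempt to make $(X_0\cap\Lambda,p_0)$ ``very general'' is needed; indeed the latter is generally impossible because $p_0$ is by assumption a special point.

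Finally, note that the paper does \emph{not} prove the full conjecture: in the boundary case $2n-k-\sum d_i=-1$ the method only shows the locus of points equivalent to some other point is zero-dimensional. The obstruction is intrinsic: on a very general Calabi--Yau hypersurface a general point is rationally equivalent to another point (via residual intersections of $(d-1)$-tangent lines), so $G_{n,\bd}$ does not have the extra unit of codimension your argument would need to close that case. Any plan claiming the full statement must supply an idea beyond the Grassmannian technique at this boundary.
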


If this Conjecture holds, then the result is sharp \cite{CLS}. Voisin \cite{voisinChow, V2, V3} proves Conjecture \ref{conj-CLS} for hypersurfaces in the case $2n-d-1 < -1$. Chen, Lewis, and Sheng \cite{CLS} extend the result to $2n-d-1 = -1$, and also prove the analog of Voisin's bound for complete intersections. Both papers use fairly involved Hodge theory arguments. Roitman \cite{R1,R2} proves the $2n-k-\sum_{i=1}^k d_i = n-2$ case. Using Roitman's result, we prove all but the $2n-k-\sum_{i=1}^k d_i = -1$ case of Conjecture \ref{conj-CLS}, and in this case we prove the result holds with the exception of possibly countably many points. 

\begin{theorem}
\label{thm-chow}
If $X \subset \PP^n$ is a very general complete intersection of multidegree $(d_1, \dots, d_k)$, then no two points of $X$ are rationally Chow equivalent if $2n-k-\sum_{i=1}^k d_i < -1$. If $2n-k-\sum_{i=1}^k d_i = -1$, then the set of points rationally equivalent to another point of $X$ is a countable union of points. If $2n-k-\sum_{i=1}^k d_i \geq 0$, then the space of points of $X$ rationally equivalent to a fixed point $p \in X$ has dimension at most $2n-k-\sum_{i=1}^k d_i$ in $X$.
\end{theorem}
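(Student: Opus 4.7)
The plan is to deduce Theorem~\ref{thm-chow} from Roitman's result — which handles the case $e := 2n-k-\sum d_i = n-2$ — via the Grassmannian technique of \cite{RY}. First I would reformulate as an incidence problem. Let $\MM$ parametrize complete intersections of multidegree $\bd$ in $\PP^n$, let $\uU \subset \MM \times \PP^n$ be the universal family with projections $\pi_\MM$ and $\pi_{\PP^n}$, and for each $m \geq 0$ set
\[ Z_m = \{(X, p) \in \uU : \dim B(X, p) \geq m\}, \]
where $B(X, p)$ is the set of points of $X$ Chow equivalent to $p$; upper semi-continuity on relative Chow varieties makes each $Z_m$ a countable union of Zariski closed subsets of $\uU$, and similarly for $Z^{\ast} := \{(X,p) : B(X,p) \supsetneq \{p\}\}$. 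The three cases of the theorem then translate respectively to: no component of $Z^{\ast}$ dominates $\MM$ when $e < -1$; no positive-dimensional component of $Z^{\ast}$ dominates $\MM$ when $e = -1$; and no component of $Z_{e+1}$ dominates $\MM$ when $e \geq 0$.

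The core tool is the Grassmannian principle extracted from \cite{RY}: if $W \subset \uU$ is a closed $PGL_{n+1}$-invariant subset whose projection $\pi_\MM : W \to \MM$ is dominant, then $W$ contains pairs $(X, p)$ with $p$ \emph{very general} in $X$. The underlying dimension count runs as follows. First, $PGL_{n+1}$-equivariance forces the fibers of $\pi_{\PP^n}|_W$ to be equidimensional of common dimension $w$, so $\dim W = n + w$. Second, each such fiber lies in the subsystem $\{X \in \MM : p \in X\}$, which has codimension $k$ in $\MM$. Dominance of $W \to \MM$ therefore imposes $w \geq \dim \MM - n$, and the Grassmannian incidence arguments of \cite{RY} promote this numerical inequality to the geometric conclusion that $W$ meets the locus where $p$ is very general in $X$.

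With this principle in hand, I would apply it to the bad loci above. Suppose, say, that $Z_{e+1}$ had a component dominating $\MM$. The principle would then yield a very general $(X, p) \in Z_{e+1}$ with $p$ very general in $X$ and $\dim B(X, p) \geq e + 1$. A degeneration argument — interpolating $\bd$ through multidegrees until one lands in the Roitman regime $e = n-2$, while tracking the deformation of Chow equivalence classes — would contradict Roitman's uniform bound $\dim B(X', p') \leq n - 2$ for \emph{every} $p' \in X'$. The same scheme disposes of the case $e < -1$; for $e = -1$ one only controls positive-dimensional components of the bad locus, so zero-dimensional components may persist, accounting for the countable-union-of-points conclusion.

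The main obstacle is the Grassmannian step itself: verifying, for each specific bad locus $W$, that $PGL_{n+1}$-invariance combined with dominance really does force $W$ to contain configurations with $p$ very general in $X$, rather than allowing $W$ to live on a proper invariant subvariety of $\uU$. A secondary subtlety is the $e = -1$ case, where no dimension count can exclude isolated Chow equivalence classes — this reflects the sharpness of the countable-union conclusion and the fact that Conjecture~\ref{conj-CLS} itself remains open precisely in this boundary case.
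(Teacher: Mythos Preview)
Your proposal misidentifies the Grassmannian technique. The principle you invoke --- that a $PGL_{n+1}$-invariant closed $W \subset \uU$ dominating $\MM$ must contain pairs $(X,p)$ with $p$ \emph{very general} in $X$ --- is false in this generality: take $W$ to be the locus of $(X,p)$ with $p$ lying on a rational curve in $X$, for $X$ a high-degree hypersurface. This $W$ is $PGL$-invariant and can dominate $\MM$, yet rational curves do not cover a very general such $X$. Nothing in \cite{RY} asserts this; the equidimensionality-of-fibers count you sketch only gives $\dim W \geq \dim \MM$, which is the hypothesis, not the conclusion.

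The paper's technique is different in kind: it fixes the multidegree $\bd$ and varies the \emph{ambient dimension} $n$. One defines bad loci $E_{n,\bd}, G_{n,\bd} \subset \uU_{n,\bd}$ for each $n$ simultaneously, and the key compatibility (Condition~1 of Theorem~\ref{thm-technicaltool}) is that if $(p,X) \in E_{n,\bd}$ and $X$ sits as a hyperplane section of $X' \subset \PP^{n+1}$, then $(p,X') \in E_{n+1,\bd}$ --- which holds because Chow equivalence in $X$ persists in $X'$. Roitman's theorem supplies the base case at the ambient dimension where $X$ is Calabi-Yau, i.e.\ where $\sum d_i$ equals the ambient dimension plus one; the Grassmannian incidence (Proposition~\ref{GrassmanProp}) then propagates codimension bounds downward in $n$, one unit of codimension per unit of ambient dimension. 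Your proposed ``interpolation of $\bd$ through multidegrees'' goes in the wrong direction: multidegrees are discrete, there is no degeneration linking different $\bd$, and even if there were, Chow-equivalence loci do not specialize well across such families. The missing idea is precisely this passage between ambient dimensions via hyperplane sections.
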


Together with Chen, Lewis and Sheng's result, this completely resolves Conjecture \ref{conj-CLS} in the case of hypersurfaces. Our method appears substantially simpler than the previous work of Voisin \cite{voisinChow, V2, V3} and Chen, Lewis, and Sheng \cite{CLS}, although in the case of hypersurfaces, we do not recover the full strength of Chen, Lewis, and Sheng's result.

%All but the $d = 2n$ case in this theorem are sharp. The results for $d \leq 2n-1$ were previously unknown. The $d \geq 2n+1$ case is a difficult result of Voisin, and the similarly difficult paper of Chen, Lewis and Sheng \cite{CLS} proves that no two points of $X$ are rationally Chow equivalent if $d=2n$. The advantage of our method is that it provides a substantially shorter proof, and works in a straightforward manner for complete intersections.

The third result relates to Seshadri constants. Let $\epsilon(p,X)$ be the Seshadri constant of $X$ at the point $p$, defined to be the infimum of $\frac{\deg C}{\mult_p C}$ over all curves $C$ in $X$ passing through $p$. Let $\epsilon(X)$ be the Seshadri constant of $X$, defined to be the infimum of the $\epsilon(p,X)$ as $p$ varies over the hypersurface.

\begin{theorem}
\label{thm-seshadri}
Let $r > 0$ be a real number. If for a very general hypersurface $X_0 \subset \PP^{2n-1}$ of degree $d$ the Seshadri constant $\epsilon(p,X_0)$ of $X_0$ at a general point $p$ is at least $r$, then for a very general $X \subset \PP^n$ of degree $d$, the Seshadri constant $\epsilon(X)$ of $X$ is at least $r$.
\end{theorem}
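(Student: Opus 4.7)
The plan is to apply the Grassmannian technique of \cite{RY} to the countable family of Zariski-closed conditions coming from curves of bounded degree and multiplicity. The observation is that $\epsilon(p, X) < r$ is equivalent to the existence of a curve $C \subset X$ through $p$ with $\deg C / \mult_p C < r$; stratifying by the Hilbert polynomial of $C$ exhibits the condition as a countable union, over pairs of positive integers $(e, \mu)$ with $e/\mu < r$, of the Zariski-closed loci
\[ \BB_m^{e,\mu} = \{(X, p) \in \tilde\HH_m : \exists\, C \subset X \text{ with } p \in C,\ \deg C = e,\ \mult_p C \geq \mu\} \]
inside the universal pointed hypersurface $\tilde\HH_m = \{(X, p) \in \HH_m \times \PP^m : p \in X\}$, where $\HH_m$ denotes degree-$d$ hypersurfaces in $\PP^m$. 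The hypothesis says each $\BB_{2n-1}^{e, \mu}$ is a proper Zariski-closed subset of $\tilde\HH_{2n-1}$; the goal is to show that each $\BB_n^{e, \mu}$ fails to dominate $\HH_n$, since then a very general $X \in \HH_n$ avoids the countable union of these images and satisfies $\epsilon(p, X) \geq r$ for every $p \in X$.

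Fix $(e, \mu)$ and set up the basic incidence
\[ \II = \{(Y, \Lambda, p) : Y \in \HH_{2n-1},\ \Lambda \cong \PP^n \subset \PP^{2n-1},\ p \in Y \cap \Lambda\} \]
with projections $\pi\colon (Y, \Lambda, p) \mapsto (Y, p)$ to $\tilde\HH_{2n-1}$ and $\rho\colon (Y, \Lambda, p) \mapsto (Y \cap \Lambda, p)$ to $\tilde\HH_n$ (under an identification $\Lambda \cong \PP^n$). The lift in the opposite direction is explicit: given $(X, p) \in \tilde\HH_n$ and a linear embedding $\PP^n \hookrightarrow \PP^{2n-1}$ with image $\Lambda$, extend the defining equation of $X$ to $\PP^{2n-1}$ by adding arbitrary multiples of the linear forms cutting out $\Lambda$. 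Because any curve $C \subset Y \cap \Lambda$ sits in $Y$ with the same degree and the same multiplicity at $p$, we obtain the containment $\rho^{-1}(\BB_n^{e, \mu}) \subset \pi^{-1}(\BB_{2n-1}^{e, \mu})$. The crux of the argument, which is the Grassmannian technique of \cite{RY} applied here, is the density claim: if $\BB_n^{e, \mu}$ dominates $\HH_n$, then $\pi(\rho^{-1}(\BB_n^{e, \mu}))$ is Zariski-dense in $\tilde\HH_{2n-1}$. Combined with $\BB_{2n-1}^{e, \mu}$ being Zariski-closed, this forces $\BB_{2n-1}^{e, \mu} = \tilde\HH_{2n-1}$, contradicting the hypothesis. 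Intersecting the complements of the images of $\BB_n^{e, \mu}$ over the countable collection of $(e, \mu)$ then yields the theorem.

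The main obstacle is the density claim. A fiber-dimension count sets up the numerology: the generic fiber of $\pi$ has dimension $n(n-1)$ (parametrized by the Grassmannian of $n$-planes through a fixed point of $\PP^{2n-1}$), and the affine space of lifts $Y$ of a given $(X, \Lambda)$ has dimension $\binom{2n-1+d}{d} - \binom{n+d}{d}$. These combine so that the image of $\rho^{-1}(\BB_n^{e, \mu})$ under $\pi$ has dimension matching $\dim \tilde\HH_{2n-1}$ exactly, provided that for generic $(Y, p)$ the $\Lambda$-variation $\Lambda \mapsto (Y \cap \Lambda, p)$ meets $\BB_n^{e, \mu}$ transversely. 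Establishing this transversality is precisely the content of the Grassmannian technique of \cite{RY}, and the remainder of the argument is bookkeeping: checking that the Hilbert component indexed by $(e, \mu)$ is preserved under slicing, and taking the countable intersection over $(e, \mu)$ at the end.
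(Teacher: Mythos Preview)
Your approach is essentially the same as the paper's: decompose the bad Seshadri locus as a countable union of closed loci, observe that a curve witnessing $\deg C/\mult_p C < r$ in a linear section witnesses the same inequality in the ambient hypersurface, and then invoke the Grassmannian technique of \cite{RY}. The paper does exactly this, but packages the last step differently: it proves an inductive statement (Theorem~\ref{thm-technicaltool}) that, given codimension $\geq 1$ at ambient dimension $m-1$, yields codimension $\geq c$ at ambient dimension $m-c$, by repeated application of Corollary~\ref{GrassmanCor}. Then the Seshadri proof is two lines: check hypothesis~1 (your containment $\rho^{-1}(\BB_n^{e,\mu})\subset\pi^{-1}(\BB_{2n-1}^{e,\mu})$ is precisely this) and hypothesis~2 (given), and conclude that $Z_{n,d}$ has codimension $\geq n$ in $\uU_{n,d}$, hence cannot dominate the parameter space.

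Where your write-up is weaker is in collapsing the induction into a single incidence correspondence between $\PP^{2n-1}$ and $\PP^n$ and a one-shot dimension count. The sentence ``these combine so that the image \ldots has dimension matching $\dim\tilde\HH_{2n-1}$ exactly, provided \ldots transversality'' is where all the content lives, and that ``transversality'' is not a generic or Bertini-type statement: it is exactly what the step-by-step codimension bound in Proposition~\ref{GrassmanProp}/Corollary~\ref{GrassmanCor} establishes, one hyperplane at a time. So when you say ``establishing this transversality is precisely the content of the Grassmannian technique of \cite{RY},'' you are right, but at that point you have deferred the entire argument. There is also some looseness in your setup of $\rho$ (the identification $\Lambda\cong\PP^n$ is a $PGL_{n+1}$'s worth of choices, which has to be tracked for the dimension count to be honest). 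None of this is wrong, but the paper's inductive formulation is both shorter and more transparent: rather than asserting a transversality that is hard to check directly, it proves the codimension drop one step at a time.
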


The layout of the paper is as follows. In Section \ref{sec-technique}, we lay out our general technique, and immediately use it to prove Theorem \ref{thm-seshadri}. In Section \ref{sec-hyperbolicity}, we discuss how to use the results of Section \ref{sec-technique} to prove hyperbolicity results. In Section \ref{sec-cycles}, we discuss how to prove Theorem \ref{thm-chow}.

\subsection*{Acknowledgements}
We would like to thank Xi Chen, Izzet Coskun, Jean-Pierre Demailly, Mihai P\u{a}un, Chris Skalit, and Matthew Woolf for helpful discussions and comments.

\section{The Technique}
\label{sec-technique}
We set some notation. Let $B$ be the moduli space of complete intersections of multidegree $(d_1, \dots, d_k)$ in $\PP^{n+k}$ and $\uU_{n,\bd} \subset \PP^{n+k} \times B$ be the variety of pairs $([p],[X])$ with $[X] \in B$ and $p \in X$. We refer to elements of $\uU_{n,\bd}$ as pointed complete intersections. When we talk about the codimension of a countable union of subvarieties of $\uU_{n,\bd}$, we mean the minimum of the codimensions of each component.

We need the following result from \cite{RY}.
\begin{proposition}
\label{GrassmanProp}
Let $C \subset \gG(r-1,m)$ be a nonempty family of $r-1$-planes of codimension $\epsilon > 0$, and let $B \subset \gG(r,m)$ be the space of $r$ planes that contain some $r-1$-plane $c$ with $[c] \in C$. Then $\codim(B \subset \gG(r,m)) \leq \epsilon -1$.
\end{proposition}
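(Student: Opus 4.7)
The plan is to work with the incidence correspondence $I = \{(c, L) : c \in C, L \in \gG(r, m), c \subset L\}$ and its two projections $\pi_1 : I \to C$ and $\pi_2 : I \to \gG(r, m)$. Since the $r$-planes containing a fixed $(r-1)$-plane form a $\PP^{m-r}$, the map $\pi_1$ is a $\PP^{m-r}$-bundle, and $\dim I = \dim C + (m-r)$. The image of $\pi_2$ is $B$, and its fiber over $L \in B$ is $\gG(r-1, L) \cap C$ sitting inside $\gG(r-1, L) \cong \PP^r$. A direct dimension count using $\dim C = r(m-r+1) - \epsilon$ and $\dim \gG(r, m) = (r+1)(m-r)$ shows that if the generic fiber of $\pi_2$ over $B$ has dimension at most $r-1$, then $\codim B \leq \epsilon - 1$, while if that fiber were the full $\PP^r$ the same count would only give $\codim B \leq \epsilon$. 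The entire content of the proposition is therefore to rule out the latter ``saturated'' case.

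The hard step is to show that the fiber over a general $L \in B$ is a proper subvariety of $\gG(r-1, L)$, and I would prove this by contradiction. If the fiber were generically full, then $B$ would coincide with the closed locus $B_{\mathrm{sat}} := \{L : \gG(r-1, L) \subset C\}$. Unpacking what this means back on $C$: for every $c \in C$ and every $c' \in \gG(r-1, m)$ with $\dim(c \cap c') \geq r - 2$, the span $L = \mathrm{span}(c, c')$ is an $r$-plane containing $c$, so $L \in B = B_{\mathrm{sat}}$, whence $c' \in \gG(r-1, L) \subset C$. In other words, $C$ would be closed under the relation $c \sim c' \iff \dim(c \cap c') \geq r - 2$.

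To finish I would show that this relation connects any two $(r-1)$-planes in finitely many steps, forcing $C = \gG(r-1, m)$ and contradicting $\epsilon > 0$. Given arbitrary $c, c'$, extend a basis of $c \cap c'$ to bases of $c$ and of $c'$, and interpolate by swapping the basis vectors of $c$ for those of $c'$ one at a time; consecutive planes in the resulting chain share $r-1$ basis vectors and hence meet in at least a projective $(r-2)$-plane. Thus the generic fiber of $\pi_2$ has dimension at most $r-1$, and the codimension bound follows from the dimension count of the first paragraph.
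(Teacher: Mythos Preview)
Your proof is correct and follows essentially the same approach as the paper: the same incidence correspondence and dimension count, together with the observation that if the generic fiber of the projection to $B$ were the full $\PP^r$ then $C$ would have to be all of $\gG(r-1,m)$. The paper only sketches this last point (``since otherwise it can be shown that $C = \gG(r-1,m)$''), whereas you supply the argument explicitly via closure under the relation of sharing an $(r-2)$-plane and a basis-swapping chain.
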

\begin{proof}
For the reader's convenience, we sketch the proof. Consider the incidence-correspondence $I = \{ ([b],[c]) | \: [b] \in B, [c] \in C \} \subset \gG(r,m) \times \gG(r-1,m)$. The fibers of $\pi_2$ over $C \subset \gG(r-1,m)$ are all $\PP^{m-r}$'s, while for a general $[b] \in B$, the fiber $\pi_1^{-1}([b])$ has codimension at least $1$ in the $\PP^r$ of $r-1$-planes contained in $b$ (since otherwise it can be shown that $C = \gG(r,m)$). The result follows by a dimension count.
\end{proof}

We need a few other notions for the proof. A parameterized $r$-plane in $\PP^m$ is a degree one injective map $\Lambda: \PP^r \to \PP^m$. Let $G_{r,m,p}$ be the space of parameterized $r$-planes in $\PP^m$ whose images pass through $p$. If $(p,X)$ is a pointed hypersurface in $\PP^m$, a parameterized $r$-plane section of $(p,X)$ is a pair $(\Lambda^{-1}(p), \Lambda^{-1}(X)) =: \phi^* (p,X)$, where $\Lambda: \PP^r \to \PP^m$ is a parameterized $r$-plane whose image does not lie entirely in $X$. We say that $\Lambda: \PP^r \to \PP^m$ contains $\Lambda': \PP^{r-1} \to \PP^m$ if $\Lambda(\PP^r)$ contains $\Lambda'(\PP^{r-1})$.

\begin{corollary}
\label{GrassmanCor}
If $C \subset G_{r-1,m,p}$ is a nonempty subvariety of codimension $\epsilon > 0$ and $B \subset G_{r,m,p}$ is the subvariety of parameterized $r$-planes that contain some $r-1$-plane $[c] \in C$, then $\codim(B \subset G_{r,m,p}) \leq \epsilon -1$.
\end{corollary}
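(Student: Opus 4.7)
The plan is to reduce the parameterized statement to Proposition \ref{GrassmanProp} by forgetting the parameterization. For $s \in \{r-1, r\}$, let $\gG_{s,m,p}$ denote the sub-Grassmannian of (unparameterized) $s$-planes in $\PP^m$ passing through $p$, and let $\pi_s \colon G_{s,m,p} \to \gG_{s,m,p}$ be the forgetful map sending $\Lambda$ to its image $\Lambda(\PP^s)$. The fiber of $\pi_s$ over a plane $V$ is the $PGL(s+1)$-torsor of isomorphisms $\PP^s \xrightarrow{\sim} V$, so $\pi_s$ is smooth and surjective with equidimensional fibers of dimension $(s+1)^2 - 1$; in particular, pullback by $\pi_s$ preserves codimensions, and images satisfy the fiber-dimension inequality with equality on fiber bounds.

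Next, project $C$ down by setting $C' := \pi_{r-1}(C) \subset \gG_{r-1,m,p}$. Since the fibers of $\pi_{r-1}|_C$ have dimension at most $r^2 - 1$, the fiber-dimension inequality gives $\codim(C' \subset \gG_{r-1,m,p}) \leq \epsilon$. Because containment of parameterized planes was defined purely in terms of images, one checks directly that $B = \pi_r^{-1}(B')$, where $B' \subset \gG_{r,m,p}$ is the sublocus of $r$-planes through $p$ containing some $(r-1)$-plane of $C'$. By the previous paragraph, $\codim(B \subset G_{r,m,p}) = \codim(B' \subset \gG_{r,m,p})$, so it suffices to bound the right-hand side.

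Finally, linear projection from $p$ gives an isomorphism $\gG_{s,m,p} \xrightarrow{\sim} \gG(s-1, m-1)$ which preserves the containment relation, placing us exactly in the setting of Proposition \ref{GrassmanProp} with $(r,m)$ replaced by $(r-1,m-1)$. If $\codim(C' \subset \gG_{r-1,m,p}) \geq 1$, then the proposition yields $\codim(B' \subset \gG_{r,m,p}) \leq \codim(C') - 1 \leq \epsilon - 1$, as desired. If instead $C' = \gG_{r-1,m,p}$, then $B' = \gG_{r,m,p}$ and the bound $0 \leq \epsilon - 1$ still holds since $\epsilon \geq 1$ (codimensions being integers). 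The main technical point is the first paragraph, namely verifying that $\pi_s$ is smooth with equidimensional fibers so that codimensions behave as claimed both under image and under pullback; once this is set up, the reduction to Proposition \ref{GrassmanProp} is mechanical.
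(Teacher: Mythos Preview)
Your proposal is correct and follows the natural route: the paper states the corollary without proof, treating it as an immediate consequence of Proposition~\ref{GrassmanProp}, and your argument spells out precisely that reduction (forget the parameterization via the $PGL$-torsor $\pi_s$, then project from $p$ to land in the ordinary Grassmannian). The only remark is that your handling of the edge case $C' = \gG_{r-1,m,p}$ and the verification that $B = \pi_r^{-1}(B')$ are exactly the details one would fill in, so there is nothing to correct.
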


Let $\xX_{n,\bd} \subset \uU_{n,\bd}$ be an open subset. For instance, $\xX_{n,\bd}$ might be equal to $\uU_{n,\bd}$ or it might be the universal complete intersection over the space of smooth complete intersections. Our main technical tool is the following.

\begin{theorem}
\label{thm-technicaltool}
Suppose we have an integer $m$ and for each $n \leq n_0$ we have $Z_{n,d} \subset \xX_{n,\bd}$ a countable union of locally closed varieties satisfying: \begin{enumerate}
\item If $(p,X) \in Z_{n,\bd}$ and is a parameterized hyperplane section of $(p',X')$, then $(p',X') \in Z_{n+1,\bd}$.
\item $Z_{m-1,\bd}$ has codimension at least $1$ in $\xX_{m-1,d}$.
\end{enumerate}
Then the codimension of $Z_{m-c,\bd}$ in $\xX_{m-c,\bd}$ is at least $c$.
\end{theorem}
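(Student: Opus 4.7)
The plan is to induct on $c$, the base case $c=1$ being exactly hypothesis (2). For the inductive step, I reduce to the following \emph{key claim}: whenever $\epsilon := \codim(Z_{n,\bd} \subset \xX_{n,\bd}) \geq 1$, we have $\codim(Z_{n+1,\bd} \subset \xX_{n+1,\bd}) \leq \epsilon - 1$. Given the key claim, the inductive hypothesis $\codim(Z_{m-c,\bd}) \geq c$ applied with $n = m-c-1$ forces $\codim(Z_{m-c-1,\bd}) \geq c+1$; the degenerate possibility $Z_{m-c-1,\bd} = \xX_{m-c-1,\bd}$ is excluded directly by condition (1), since then every $(p',X') \in \xX_{m-c,\bd}$ would admit a hyperplane section in $Z_{m-c-1,\bd}$, forcing $Z_{m-c,\bd} = \xX_{m-c,\bd}$ and contradicting the inductive hypothesis.

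To prove the key claim, I introduce the incidence
\[
\mathcal{I} = \bigl\{((p',X'),\Lambda) : (p',X') \in \xX_{n+1,\bd},\ \Lambda \in G_{n+k,n+1+k,p'},\ \Lambda(\PP^{n+k}) \not\subset X'\bigr\}
\]
with the forgetful projection $\pi: \mathcal{I} \to \xX_{n+1,\bd}$ and the hyperplane-section map $\phi^*: \mathcal{I} \to \xX_{n,\bd}$ given by $((p',X'),\Lambda) \mapsto (\Lambda^{-1}(p'), \Lambda^{-1}(X'))$. Both maps are equidimensional: fibers of $\pi$ are open subsets of the Grassmannian $G_{n+k,n+1+k,p'}$, and fibers of $\phi^*$ over $(p,X)$ parameterize the ways to extend the $k$ defining polynomials of $X$ by an extra variable, an affine space of fixed dimension. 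Consequently $(\phi^*)^{-1}(Z_{n,\bd})$ has codimension $\epsilon$ in $\mathcal{I}$, and by condition (1) its $\pi$-image $W$ lies inside $Z_{n+1,\bd}$, so it suffices to prove $\codim(W,\xX_{n+1,\bd}) \leq \epsilon - 1$.

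This bound is precisely what Corollary \ref{GrassmanCor} delivers once the incidence is phrased in Grassmannian language. Lifting to a common large ambient $\PP^M$ through a fixed point $p_0$, parameterized hyperplane sections correspond to plane containments $\Lambda_n \subset \Lambda_{n+1}$ with $\Lambda_n \in G_{n+k,M,p_0}$ and $\Lambda_{n+1} \in G_{n+1+k,M,p_0}$, while the complete-intersection data enters through an auxiliary equidimensional fibration $\yY_r \to \xX_{r,\bd}$ of pointed parameterized c.i.'s in $\PP^M$. The lifted locus corresponding to $Z_{n,\bd}$ has codimension $\epsilon$ in the appropriate Grassmannian slice, and Corollary \ref{GrassmanCor} returns the containing family on the $(n+1+k)$-plane side with codimension at most $\epsilon - 1$, which descends to $\codim(W) \leq \epsilon - 1$ in $\xX_{n+1,\bd}$.

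The main obstacle, as I see it, is the careful decoupling of the Grassmannian combinatorics from the complete-intersection moduli in the lift: one must verify that the auxiliary fibrations $\yY_r \to \xX_{r,\bd}$ are equidimensional and that the codimension $\epsilon$ of $Z_{n,\bd}$ is preserved upon lift to the Grassmannian, so that Corollary \ref{GrassmanCor} applies with its hypothesis intact. Once this book-keeping is in place, the key claim, and hence the theorem, follows directly.
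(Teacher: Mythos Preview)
Your outline is correct and rests on the same core ingredient (Corollary~\ref{GrassmanCor}), but the paper organizes the argument differently and more cleanly. Rather than proving a global ``key claim'' comparing $\codim Z_{n+1,\bd}$ to $\codim Z_{n,\bd}$, the paper fixes at the outset a \emph{single} pointed complete intersection $(p,Y)$ in a large $\PP^N$, chosen so that both a general point $(p_0,X_0)$ of a component of $Z_{m-c,\bd}$ and a very general $(p_1,X_1)\notin\overline{Z_{m-1,\bd}}$ occur as parameterized linear sections of it. The entire induction then runs inside the honest Grassmannians $\FF_r\subset G_{r,N,p}$, where Corollary~\ref{GrassmanCor} applies verbatim at each step; the base case holds simply because the plane cutting out $(p_1,X_1)$ lies outside $\phi^{-1}(Z_{m-1,\bd})$, and only at the very end is the codimension bound transferred once back to $\xX_{m-c,\bd}$.

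Two remarks on your version. First, the incidence $\mathcal{I}$ over the small ambient $\PP^{n+1+k}$ cannot by itself yield the ``$-1$'': the fibers of $\pi$ are already hyperplanes, so there is no room to invoke Corollary~\ref{GrassmanCor} there, and a bare dimension count only gives $\codim(W)\le\epsilon$. You recognize this and pass to the lift, which is where the real content lies; the $\mathcal{I}$ step is essentially just a restatement of condition~(1). Second, your lift via a varying family $\yY_r\to\xX_{r,\bd}$ over \emph{all} pointed c.i.'s in $\PP^M$ does work, but it forces exactly the bookkeeping you flag: one must check that codimension is preserved when pulling $Z_{n,\bd}$ back to $\yY_n$, then slice by a generic $Y$ to land in a single Grassmannian $G_{n+k,M,p_0}$ where the Corollary applies, and finally push the resulting bound back down to $\xX_{n+1,\bd}$. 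The paper's device of fixing one clever $(p,Y)$ from the start collapses all of this into a single Grassmannian computation, so that no global flatness statement for $\yY_r\to\xX_{r,\bd}$ is needed and the only transfer of codimension between the Grassmannian and $\xX$ happens once, at the conclusion.
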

\begin{proof}[Proof of Theorem \ref{thm-technicaltool}]
We adopt the method from \cite{RY}. We prove that for a very general point $(p_0,X_0)$ of a component of $Z_{m-c,\bd}$, there is a variety $\FF_{m-c}$ and a map $\phi: \FF_{m-c} \to \uU_{m-c,\bd}$ with $(p_0,X_0) \in \phi(\FF_{m-c})$ and $\codim(\phi^{-1}(Z_{m-c,\bd}) \subset \FF_{m-c}) \geq c$. This suffices to prove the result.

So, let $(p_0, X_0)$ be a general point of a component of $Z_{m-c,\bd}$, and let $(p_1,X_1) \in \xX_{m-1,\bd}$ be very general, so that $(p_1,X_1)$ is not in the closure of any component of $Z_{m-1,\bd}$ by hypothesis 2. Choose $(p,Y) \in \xX_{N,\bd}$ for some sufficiently large $N$ such that $(p_0,X_0)$ and $(p_1,X_1)$ are parameterized linear sections of $(p,Y)$. Then for all $n < N$, let $\FF_{n}$ be the space of parameterized $n$-planes in $\PP^N$ passing through $p$ such that for $\Lambda \in \FF_{n}$, $\Lambda^*(p,Y)$ is in $\xX_{n,\bd}$. This means that $\FF_n$ is an open subset of $G_{n,N,p}$. Let $\phi: \FF_{n} \to \xX_{n,\bd}$ be the map sending $\Lambda: \PP^n \to \PP^N$ to $\Lambda^*(p,Y)$. 

We prove that $\codim(\phi^{-1}(Z_{m-c,\bd}) \subset \FF_{m-c}) \geq c$ by induction on $c$. For the $c=1$ case, we see by construction that $\phi^{-1}(Z_{m-1,\bd})$ has codimension at least $1$ in $\FF_{m-1}$, since $(p_1,X_1)$ is a parameterized $m-1$-plane section of $(p,Y)$ but is not in the closure of any component of $Z_{m-1,\bd}$. Now suppose we know that $\codim(\phi^{-1}(Z_{m-c,\bd}) \subset \FF_{m-c}) \geq c$. We use Corollary \ref{GrassmanCor} with $C$ equal to $\phi^{-1}(Z_{m-c-1,\bd})$. By hypothesis 1, we see that $B$ is contained in $\phi^{-1}(Z_{m-c,\bd})$. It follows from this that 
\[ c \leq \codim( \phi^{-1}(Z_{m-c,\bd}) \subset \FF_{m-c})) \leq \codim(B \subset \FF_{m-c}) \] 
\[ \leq \codim(C \subset \FF_{m-c-1}) -1 .  \]
Rearranging, we see that
\[ \codim( \phi^{-1}(Z_{m-c-1,\bd}) \subset \FF_{m-c-1}) = \codim(C \subset \FF_{m-c-1}) \geq c+1 . \]
The result follows.
\end{proof}

As an immediate application, we prove Theorem \ref{thm-seshadri}.

\begin{proof}[Proof of Theorem \ref{thm-seshadri}]
Let $r$ be given. Let $Z_{m,d} \subset \uU_{m,d}$ be the set of pairs $(p,X)$ where $\epsilon(p,X) < r$. We apply Theorem \ref{thm-technicaltool} to $Z_{m,d}$. We see that $Z_{m,d}$ is a countable union of algebraic varieties, and by hypothesis, $Z_{2n-1,d} \subset \uU_{2n-1,d}$ has codimension at least $1$. Now suppose that $(p_0,X_0) \in Z_{m,d}$. Then there is some curve $C$ in $X_0$ with $\frac{\deg C}{\mult_{p_0} C} < r$. Thus, for any $X$ containing $p$, we see that the Seshadri constant of $X$ at $p$ is at most $\frac{\deg C}{\mult_{p_0} C} < r$. This shows that the $Z_{m,d}$ satisfy the conditions of Theorem \ref{thm-technicaltool}, which shows that $Z_{n,d} \subset \uU_{n,d}$ has codimension at least $n$. By dimension reasons, this means that $\uU_{n,d}$ cannot dominate the space of hypersurfaces, so the result follows.
\end{proof}

\section{Hyperbolicity}
\label{sec-hyperbolicity}
Let $\xX_{n,d}$ be the universal hypersurface in $\PP^n$ over the open subset $U$ in the moduli space of all degree $d$ hypersurfaces in $\PP^n$ consisting of all smooth hypersurfaces. Many people have developed a technique for restricting the entire curves contained in a fiber of the map $\pi_2: \xX_{n,d} \to U$. See the article of Demailly for a detailed description of some of this work \cite{D2}. For a variety $X$, let $\ev: J_k(X) \to X$ be the space of $k$-jets of $X$. Then, if $X \subset \PP^n$ is a smooth degree $d$ hypersurface, there is a vector bundle $E_{k,m}^{GG} T_X^*$ whose sections act on $J_k(X)$. Global sections of $E_{k,m}^{GG} T_X^* \otimes \OO(-H)$ vanish on the $k$-jets of entire curves. This means that sections of $E_{k,m}^{GG} T_X^* \otimes \OO(-H)$ cut out a closed subvariety $S'_{k,m}(X) \subset J_k(X)$ such that any entire curve is contained in $\ev(S'_{k,m}(X))$. In fact, it can be shown that any entire curve is contained in the closure of $\ev(S_{k,m}(X))$, where $S_{k,m}(X) \subset J_k(X)$ is $S'_{k,m}(X)$ minus the space of singular $k$-jets.

The construction is functorial. In particular, if $V$ is the relative tangent bundle of the map $\pi_2$, there is a vector bundle $E_{k,m}^{GG} V^*$ whose restriction to each fiber of $\pi_2$ is $E_{k,m}^{GG} T_{X}^*$. Let $\yY_{n,d} \subset \xX_{n,d}$ be the locus of $(p,X) \in \xX_{n,d}$ such that $p \in \ev(S_{k,m}(X))$. Then by functoriality, $\yY_{n,d}$ is a finite union of locally closed varieties.
%since each $E_{k,m}^{GG} T_X^* \otimes \OO(-H)$ is the restriction of $E_{k,m}^{GG} V^* \otimes \OO(-H)$, $\yY_{n,d}$ will be a finite union of locally closed varieties.

\begin{theorem}
\label{thm-Hyperbol}
Suppose that $\yY_{r-1,d} \subset \xX_{r-1,d}$ is codimension at least 1. Then $\yY_{r-c,d} \subset \xX_{r-c,d}$ is codimension at least $c$. In particular, if $\yY_{2n-3, d}$ is codimension at least $1$ in $\xX_{2n-3,d}$ and $d \geq 2n-1$, then a very general $X \subset \PP^n$ of degree $d_n$ is hyperbolic.
\end{theorem}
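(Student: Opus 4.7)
The plan is to apply Theorem~\ref{thm-technicaltool} to the family $Z_{n,d} := \yY_{n,d} \subset \xX_{n,d}$, taking the theorem's integer $m$ equal to the present $r$. The paragraph preceding the statement already shows that each $\yY_{n,d}$ is a finite union of locally closed subvarieties, and hypothesis~2 of Theorem~\ref{thm-technicaltool} is precisely our codimension assumption. The entire content therefore lies in verifying hypothesis~1: whenever $(p,X) \in \yY_{n,d}$ is a parameterized hyperplane section of $(p',X') \in \xX_{n+1,d}$, we must show $(p',X') \in \yY_{n+1,d}$.

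To verify hypothesis~1, I would invoke the standard functoriality of the Demailly--Green--Griffiths jet differentials under smooth closed embeddings. The inclusion $X \hookrightarrow X'$ gives the conormal sequence $0 \to N_{X/X'}^* \to T_{X'}^*|_X \to T_X^* \to 0$, hence a surjection of weighted jet bundles $E_{k,m}^{GG} T_{X'}^*|_X \twoheadrightarrow E_{k,m}^{GG} T_X^*$. Since $\OO(-H)$ pulls back from $\PP^{n+1}$, every global section $\tau \in H^0(X', E_{k,m}^{GG} T_{X'}^* \otimes \OO(-H))$ restricts to $X$ and descends through this surjection to a section $\bar\tau \in H^0(X, E_{k,m}^{GG} T_X^* \otimes \OO(-H))$; unwinding the construction in local coordinates where $X = \{z_{n+1} = 0\}$, one checks that $\tau(\gamma) = \bar\tau(\gamma)$ for every $\gamma \in J_k(X)_p$ (since all derivatives of $z_{n+1}$ along $\gamma$ vanish), and also that $\gamma$ is nonsingular in $X$ iff it is nonsingular in $X'$. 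Hence a nonsingular jet witnessing $p \in \ev(S_{k,m}(X))$ also witnesses $p \in \ev(S_{k,m}(X'))$, verifying hypothesis~1. This is the main bookkeeping step; it is standard but requires care to trace through the twist and the nonsingular stratum.

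With hypothesis~1 in place, Theorem~\ref{thm-technicaltool} immediately yields the first assertion $\codim(\yY_{r-c,d} \subset \xX_{r-c,d}) \geq c$. For the ``in particular'' clause, apply it with $r = 2n-2$ and $c = n-2$ to obtain $\codim(\yY_{n,d} \subset \xX_{n,d}) \geq n-2$. Since the fibers of $\pi_2 \colon \xX_{n,d} \to U$ have dimension $n-1$, the fiber $\ev(S_{k,m}(X)) \subset X$ has dimension at most $1$ for a very general $X \in U$; every entire curve in $X$ lies in the closure of this locus, and hence inside a one-dimensional algebraic subvariety $C \subset X$.

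The main remaining obstacle is ruling out nonconstant holomorphic $\CC \to C$ when $d \geq 2n-1$. For this I would combine Clemens-type results (Voisin and later authors) excluding rational curves on a very general hypersurface of degree $d \geq 2n-1$ with the analogous exclusion of elliptic curves in this range, concluding that every irreducible component of $C$ has geometric genus at least $2$. Any nonconstant $\CC \to C$ factors through the normalization of such a component---a smooth projective curve of genus $\geq 2$---which admits no nonconstant holomorphic image of $\CC$ by Picard's theorem. Hence $X$ has no entire curves, i.e., $X$ is Brody hyperbolic.
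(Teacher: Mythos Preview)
Your proposal is correct and follows essentially the same route as the paper: apply Theorem~\ref{thm-technicaltool} with $Z_{n,d}=\yY_{n,d}$, verify hypothesis~1 via the functoriality of Green--Griffiths jet differentials under the inclusion $X\hookrightarrow X'$ (the paper phrases this as a pullback on sections compatible with the inclusion of jets, you phrase it via the conormal surjection, but these are the same statement), deduce $\codim(\yY_{n,d})\geq n-2$, and finish with Voisin's theorem that every algebraic curve on a very general degree $d\geq 2n-1$ hypersurface is of general type. Your arithmetic $r=2n-2$, $c=n-2$ matches the paper's $2n-3-n+1=n-2$, and your final step (normalize and apply Picard) is just the spelled-out version of the paper's one-line appeal to Voisin.
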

\begin{proof}
 We check that $\yY_{r-1,d}$ satisfies both conditions of Theorem \ref{thm-technicaltool}. Condition 2 is a hypothesis. Condition 1 follows by the functoriality of Demailly's construction. Namely, if $(p,X_0)$ is a parameterized linear section of $(p,X)$, then the natural map $X_0 \to X$ induces a pullback map on sections 
 \[ H^0(E_{k,m}^{GG} T_X^* \otimes \OO(-H)) \to H^0(E_{k,m}^{GG} T_{X_0}^* \otimes \OO(-H)) , \]
compatible with the natural inclusion of jets $J_k(X_0) \to J_k(X)$. In particular, if some section $s$ of $H^0(E_{k,m}^{GG} T_X^* \otimes \OO(-H))$ takes a nonzero value on a jet $\alpha(j)$, where $j \in J_k(X_0)$ and $\alpha:J_k(X_0) \to J_k(X)$ is the natural inclusion, then the restriction of $s$ to $X_0$ takes a nonzero value on the original jet $j \in J_k(X_0)$. Thus, if $X_0$ has a nonsingular $k$-jet at $p$ which is annihilated by every section in $H^0(E_{k,m}^{GG} T_{X_0}^* \otimes \OO(-H))$, $X$ has such a $k$-jet as well.

To see the second statement, observe that by Theorem \ref{thm-technicaltool}, $\yY_{n,d}$ has codimension in $\xX_{n,d}$ at least $2n-3-n+1 = n-2$. It follows that a general $X$ of degree $d$ in $\PP^n$ satisfies that the image of any entire curve is contained in an algebraic curve. Since $d \geq 2n-1$, by a theorem of Voisin \cite{V2, V3}, any algebraic curve in $X$ is of general type. The result follows.
\end{proof}

The current best bound for the Green-Griffiths-Lang Conjecture is from Demailly \cite{D1,D2}. The version we use comes out of Demailly's proof.

\begin{theorem}[\cite{D2}, Section 10]
If $k$ and $m$ are sufficiently large, we have that $\yY_{n,d} \subset \xX_{n,d}$ is codimension at least $1$ for $d \geq d_n$, where $d_2 = 286, d_3 = 7316$ and
\[ d_n = \left\lfloor \frac{n^4}{3} (n \log(n \log(24n)))^n \right\rfloor .\]
\end{theorem}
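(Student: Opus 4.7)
The plan is to follow Demailly's strategy directly, then translate the resulting statement about a single hypersurface into one about the universal family. The argument has three steps.

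First, I would produce many global sections of $E_{k,m}^{GG} T_X^* \otimes \OO(-H)$ on a smooth hypersurface $X \subset \PP^n$ of degree $d \geq d_n$ with $k,m$ taken large. This proceeds via holomorphic (algebraic) Morse inequalities applied to a natural filtration of $E_{k,m}^{GG} T_X^*$ whose graded pieces can be expressed through symmetric and Schur powers of $T_X^*$. A careful Riemann--Roch computation combined with explicit vanishing estimates gives a lower bound on $h^0(X, E_{k,m}^{GG} T_X^* \otimes \OO(-H))$ that becomes positive once $d$ exceeds roughly $\frac{n^4}{3}(n \log(n \log(24n)))^n$, with the sharper constants $286$ and $7316$ coming from refined low-dimensional estimates.

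Second, I would control the base locus of these sections. Using Demailly's meromorphic connection machinery together with Wronskian-type constructions, one shows that the common zero locus inside $J_k(X)$, after removing singular jets, projects under $\ev$ into a proper subvariety of $X$. This is the standard Green--Griffiths principle: a non-singular $k$-jet annihilated by every global section of a jet differential bundle twisted by an ample line bundle lies over a proper algebraic subvariety, and a positive-dimensional family of such jets would violate the lower bound on sections from step one.

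Third, I would pass from a single hypersurface to the universal family. Since the construction of $E_{k,m}^{GG} V^*$ and of the singular jet locus is functorial in $X$, the set $\yY_{n,d} \subset \xX_{n,d}$ is cut out globally by the simultaneous vanishing of finitely many sections of relative jet bundles, minus a closed subset; hence $\yY_{n,d}$ is a finite union of locally closed subvarieties. Upper-semicontinuity of fiber dimension combined with step two then yields codimension at least $1$ for $d \geq d_n$. The hard part is the explicit numerical optimization of weights in the Morse inequality that produces the exact bound $d_n$ --- this is delicate but essentially routine once the framework is in place, and is precisely the reason to invoke \cite{D2} rather than reproving the estimate from scratch.
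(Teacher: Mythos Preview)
The paper does not give a proof of this theorem at all: it is quoted as a black-box result from \cite{D2}, Section~10, and used as input to Theorem~\ref{thm-Hyperbol}. So there is no ``paper's own proof'' to compare your proposal against. What you have written is a reasonable high-level summary of Demailly's argument in \cite{D1,D2}, and in the context of this paper that is exactly the intended status of the statement---a citation, not something to be reproved.

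That said, one remark on your sketch: in step~2 you write that ``a positive-dimensional family of such jets would violate the lower bound on sections from step one.'' This is not how the argument goes. A large $h^0$ does not by itself bound the base locus; sections could all vanish along a divisor. What Demailly actually does is prove a stronger Morse-type estimate showing that the sections are numerous enough to force the base locus in the projectivized jet bundle to have positive codimension there, and then pushes this down via $\ev$. The meromorphic-connection/Wronskian ingredients you mention belong to a different strand (closer to Siu's and Brotbek's approaches) and are not what drives the specific bound $d_n$ quoted here. If you want to cite this result cleanly, it is enough to point to \cite{D2} without the sketch; if you want to sketch it, step~2 should be rewritten to reflect the actual mechanism.
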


Using this bound, we obtain the following.

\begin{corollary}
The Kobayashi Conjecture holds hypersurfaces when $d \geq d_{2n-3}$, where $d_2 = 286, d_3 = 7316$ and
\[ d_n = \left\lfloor \frac{n^4}{3} (n \log(n \log(24n)))^n \right\rfloor .\]
\end{corollary}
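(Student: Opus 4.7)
The plan is a direct composition: I would feed Demailly's codimension estimate on $\yY_{n',d}$ at the particular index $n' = 2n-3$ into Theorem \ref{thm-Hyperbol}, whose Grassmannian/jet-bundle induction then propagates that single-dimension input down through a sequence of linear sections until it governs hypersurfaces in $\PP^n$ itself.

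First I would invoke the Demailly theorem at ambient dimension $2n-3$: for $k$ and $m$ sufficiently large and for $d \geq d_{2n-3}$, the locus $\yY_{2n-3,d}$ has codimension at least $1$ in $\xX_{2n-3,d}$. This is exactly the codimension-$1$ hypothesis that Theorem \ref{thm-Hyperbol} requires, and condition 1 of the underlying Theorem \ref{thm-technicaltool} is already covered by the functoriality of Demailly's construction, as checked in the proof of Theorem \ref{thm-Hyperbol}. Next I would verify the auxiliary numerical condition $d \geq 2n-1$ that Theorem \ref{thm-Hyperbol} invokes to apply Voisin's theorem (so that every algebraic curve on a very general hypersurface is of general type). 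Since $d_{n'}$ grows super-exponentially in $n'$ while $2n-1$ is linear in $n$, the inequality $d_{2n-3} \geq 2n-1$ holds in every case of interest: already at $n=3$ one has $d_3 = 7316 \geq 5$, and the margin only widens. With both inputs in hand, Theorem \ref{thm-Hyperbol} immediately yields Brody hyperbolicity of a very general degree-$d$ hypersurface $X \subset \PP^n$.

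The only obstacle worth naming is essentially indexing: one must align the parameter $r$ in the inductive Grassmannian argument (where the codimension-$1$ input sits at dimension $r-1 = 2n-3$ and the target dimension is $r-c = n$, forcing $c = n-2$ and hence codimension $n-2$ in $\xX_{n,d}$, which is sharp enough to constrain entire curves on a general $X$ to an algebraic curve) with the ambient dimension $2n-3$ appearing in Demailly's Green--Griffiths--Lang bound. Since the substantive content is already packaged in Theorem \ref{thm-Hyperbol} and the Demailly theorem, no further computation is required beyond this bookkeeping.
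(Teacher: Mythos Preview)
Your proposal is correct and matches the paper's approach exactly: the paper does not even write out a proof of this corollary, stating only ``Using this bound, we obtain the following,'' i.e., feed Demailly's codimension-$1$ estimate at ambient dimension $2n-3$ into Theorem~\ref{thm-Hyperbol}. Your added verification that $d_{2n-3} \geq 2n-1$ (so that Voisin's theorem applies) is a detail the paper leaves implicit but which you correctly spell out.
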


This bound of about $(2n \log (n\log n))^{2n+1}$ is slightly worse than the best current bound for the Kobayashi Conjecture from \cite{D2}, which is about $(en)^{2n+2}$. However, our technique is strong enough to allow us to prove the optimal bound from the Kobayashi Conjecture, provided one could prove the optimal result for the Green-Griffiths-Lang Conjecture.

\begin{corollary}
If $\yY_{d-2,d}$ has codimension at least $1$ in $\xX_{d-2,d}$ (as we would expect from the Green-Griffiths-Lang Conjecture), then a very general hypersurface of degree $d \geq 2n-1$ in $\PP^n$ is hyperbolic.
\end{corollary}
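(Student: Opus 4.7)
The plan is to apply Theorem \ref{thm-Hyperbol} directly with $r = d-1$. Under the hypothesis that $\codim(\yY_{d-2,d} \subset \xX_{d-2,d}) \geq 1$, the first statement of that theorem yields $\codim(\yY_{d-1-c,d} \subset \xX_{d-1-c,d}) \geq c$ for every $c \geq 1$. Setting $c = d-1-n$, we obtain $\codim(\yY_{n,d} \subset \xX_{n,d}) \geq d-1-n$.

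Next I would mimic the second half of the proof of Theorem \ref{thm-Hyperbol} to convert this into a dimension bound on $\ev(S_{k,m}(X)) \subset X$ in a very general fiber. Since $\xX_{n,d}$ has relative dimension $n-1$ over the base $U$, and since the codimension bound applies to each of the countably many components of $\yY_{n,d}$, each component either fails to dominate $U$ (and is avoided by very general $X$) or dominates with generic fiber of dimension at most $(n-1)-(d-1-n) = 2n-d$. The hypothesis $d \geq 2n-1$ forces $2n-d \leq 1$, so for a very general $X$ the subvariety $\ev(S_{k,m}(X))$ has dimension at most $1$. Consequently every entire curve in $X$ lies in the closure of $\ev(S_{k,m}(X))$, that is, inside a (possibly reducible) algebraic curve $Y \subset X$.

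To finish, I would invoke Voisin's theorem \cite{V2,V3}, as in the proof of Theorem \ref{thm-Hyperbol}: for $d \geq 2n-1$, every algebraic curve in a very general degree $d$ hypersurface $X \subset \PP^n$ is of general type, so the normalization of each irreducible component has genus at least $2$. Any holomorphic map $\CC \to Y \subset X$ lifts to the normalization and is therefore constant, showing that $X$ admits no nonconstant holomorphic map from $\CC$.

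I do not expect a substantive obstacle: this corollary is a plug-and-play combination of Theorem \ref{thm-Hyperbol} with Voisin's algebraic hyperbolicity theorem. The only noteworthy content is the numerical coincidence that the Grassmannian induction produces codimension exactly $d-1-n$, which matches or exceeds Voisin's threshold $d \geq 2n-1$ precisely when $2n-d \leq 1$, so the jet-bundle obstruction cuts entire curves down to a curve exactly when algebraic curves in $X$ are already hyperbolic.
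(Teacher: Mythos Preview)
Your proposal is correct and follows essentially the same route as the paper. The paper's proof simply invokes the second statement of Theorem \ref{thm-Hyperbol} (noting that for $d=2n-1$ one has $2n-3=d-2$), whereas you re-run the fiber-dimension argument from that proof with the parameters $r=d-1$, $c=d-1-n$; the two are the same computation, and both finish with Voisin's theorem.
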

\begin{proof}
We apply Theorem \ref{thm-Hyperbol}. We know that if $\yY_{2n-3, d}$ is codimension at least $1$ in $\xX_{2n-3,d}$, then the Kobayashi Conjecture holds for hypersurfaces in $\PP^n$ of degree $d$. We apply this result with $d = 2n-1$.
\end{proof}

Work has also been done on the hyperbolicity of complements of hypersurfaces in $\PP^n$. There are similar jet bundle techniques in this case. Given a variety $Z$, and ample line bundle $A$, a subsheaf $V \subset T_Z$ and a subvariety $X \subset Z$, one can construct the vector bundles $E_{k,m} V^*(\log X)$, sections of which act on $k$-jets of $Z \setminus X$. It can be shown that any section of $H^0(E_{k,m} V^*(\log X) \otimes A^{*})$ must vanish on the $k$-jet of any entire curve in $Z \setminus X$. Then sections of $H^0(E_{k,m} V^*(\log X) \otimes A^{*})$ cut out a subvariety $S_{k,m}$ in the locus of nonsingular $k$-jets on $Z \setminus X$ such that any entire curve lies in the closure of $\ev(S_{k,m})$. Darondeau \cite{Dar} studies these objects for hypersurfaces in $\PP^n$. Namely, he proves the following.

\begin{theorem}[Darondeau]
\label{thm-Darondeau}
Let $d \geq (5n)^2 n^n$, let $U_{n,d}$ be the space of smooth degree $d$ hypersurfaces in $\PP^n$ and consider the space $\PP^n \times U_{n,d}$, with divisor $\xX_{n,d}$ corresponding to the universal hypersurface. Let $V = T_{\pi_2}$ be the relative tangent space of projection of $\PP^n \times U_{n,d}$ onto $U_{n,d}$, and consider the locus $S_{k,m}$ cut out by sections in $H^0(E_{k,m} V^*(\log \xX_{n,d}) \otimes \OO(-H))$. Then $\ev(S_{k,m})$ is codimension at least $2$ in $\PP^n \times U_{n,d}$.
\end{theorem}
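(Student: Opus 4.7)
The plan is to adapt the slanted vector field method of Siu--Demailly--P\u{a}un--Merker from the compact hypersurface setting to the logarithmic setting of complements, following the general philosophy that jet differentials together with enough vector fields on the universal jet bundle force the base locus of the sections to be small. The proof naturally splits into three parts: (i) existence of many sections of $E_{k,m}V^*(\log\xX_{n,d}) \otimes \OO(-H)$, (ii) construction of logarithmic slanted vector fields that generate the tangent space of the total log jet bundle at generic points, and (iii) a derivation argument combining (i) and (ii) to bound the codimension of $\ev(S_{k,m})$.

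For step (i), I would establish a lower bound on $h^0(\PP^n \times U_{n,d}, E_{k,m}V^*(\log\xX_{n,d}) \otimes \OO(-H))$ via a holomorphic Morse inequality applied to the log Green--Griffiths bundle, or equivalently via Demailly's algebraic approach using filtrations on Schur-type summands of the log jet bundle. The resulting Chern-class integral becomes strictly positive for $m$ sufficiently large in terms of $k$ once $d$ exceeds a threshold of order $(5n)^2 n^n$. The ampleness of the twist $\OO(-H)$ is what forces this lower bound on $d$, because the section must carry enough spare ampleness later to absorb the poles of the vector fields.

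For steps (ii) and (iii), I would construct meromorphic vector fields on the total space of the $k$-th log jet bundle $J_k(V,\log\xX_{n,d})$ over $\PP^n \times U_{n,d}$, built by differentiating with respect to the coefficients of the universal hypersurface equation and combining this with translations in the jet fiber. The key technical claim is that at a generic jet, the values of these globally defined vector fields span the entire tangent space of the log jet bundle, with pole order along $\xX_{n,d}$ controlled linearly in $k$. Then, given $\sigma \in H^0(E_{k,m}V^*(\log\xX_{n,d}) \otimes \OO(-H))$ vanishing on $S_{k,m}$, Lie differentiating $\sigma$ along such a vector field $\xi$ yields a new section (after absorbing the pole using the $\OO(-H)$ twist) that cuts out a strictly smaller locus. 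Iterating this derivation twice lowers $\ev(S_{k,m})$ to codimension at least $2$ in $\PP^n \times U_{n,d}$.

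The main obstacle will be step (ii): producing logarithmic slanted vector fields with controlled pole order is substantially more delicate than in the compact case, since one must respect the log structure along the universal hypersurface and keep the vector fields tangent to $\xX_{n,d}$ in the appropriate log sense. Matching the unavoidable pole order against the available $\OO(-H)$ twist is what pins down the numerical bound $d \geq (5n)^2 n^n$, and the need to differentiate twice, rather than once, is what separates this codimension-$2$ statement from the analogous Green--Griffiths--Lang statement for complements, where codimension $1$ would already suffice.
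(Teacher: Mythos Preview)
The paper does not prove this statement at all: Theorem~\ref{thm-Darondeau} is quoted from Darondeau's paper \cite{Dar} as an external input, with no argument supplied, and is then fed into the Grassmannian machinery of Section~\ref{sec-technique}. So there is nothing in the paper to compare your proposal against.

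That said, your sketch is a reasonable high-level summary of the architecture of Darondeau's actual proof in \cite{Dar}: one does produce log jet differentials via a Morse-inequality or Riemann--Roch type estimate, one does construct logarithmic slanted vector fields on the universal family with controlled pole order, and one does differentiate to shrink the base locus. Two corrections are worth noting. First, the bound $(5n)^2 n^n$ in Darondeau's work is tied to the pole order of the log slanted vector fields together with the combinatorics of his Riemann--Roch estimate, not merely to ``absorbing the $\OO(-H)$ twist'' as you phrase it; the precise dependence on $n$ is governed by the number of derivations needed to generate the tangent space of the jet bundle, which grows with the jet order $k$, not by the fixed two-step iteration you describe at the end. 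Second, your final sentence misidentifies the reason for codimension $2$: it is not that one differentiates exactly twice, but that the slanted vector fields span the full tangent space of the jet bundle outside a locus that already has codimension $2$ in the base (essentially the Wronskian locus together with the universal hypersurface), so the derivation process terminates only when the base locus is contained there. If you were to write this up in detail you would need to address that point carefully rather than appeal to a fixed number of derivations.
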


Using our technique, we obtain the following effective form of the Kobayashi Conjecture.

\begin{theorem}
Let $Z_{n,d} \subset \PP^n \times U_{n,d} \setminus \xX_{n,d}$ be the locus of pairs $(p,X)$ where $p \in (\PP^n \setminus X) \setminus \ev(S_{k,m})$. Then if $Z_{r-1,d}$ has codimension at least $1$ in $(\PP^{r-1} \times U_{r-1,d}) \setminus \xX_{r-1,d}$, we have that $Z_{r-c,d}$ has codimension at least $c$ in $(\PP^{r-c} \times U_{r-c,d}) \setminus \xX_{r-c,d}$.
\end{theorem}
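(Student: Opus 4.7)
The plan is to apply Theorem~\ref{thm-technicaltool} to $Z_{n,d}$, with the role of the open subset $\xX_{n,\bd}$ played here by $\PP^n \times U_{n,d} \setminus \xX_{n,d}$. Although $\uU_{n,\bd}$ was originally defined as pointed hypersurfaces with $p \in X$, inspection of the proof of Theorem~\ref{thm-technicaltool} shows that Corollary~\ref{GrassmanCor} and the incidence argument only use that parameterized $r$-planes are required to pass through a fixed marked point $p \in \PP^N$; the condition $p \in X$ plays no role in the Grassmannian dimension count. So the technical tool applies verbatim in the complement setting, and it suffices to verify its two hypotheses for $Z_{n,d}$.

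Condition~2, that $Z_{r-1,d}$ has codimension at least $1$, is the given hypothesis. For Condition~1, suppose $(p,X_0)$ is a parameterized hyperplane section of $(p,X)$, arising from a linear inclusion $i:\PP^{r-1} \hookrightarrow \PP^r$ meeting $X$ transversely in $X_0 = X \cap \PP^{r-1}$, and that $(p,X_0) \in Z_{r-1,d}$. Then there is a nonsingular $k$-jet $j$ at $p$ in $\PP^{r-1} \setminus X_0$ annihilated by every section of $E_{k,m} V_0^*(\log X_0) \otimes \OO(-H)$. I would then invoke functoriality of Demailly's logarithmic jet bundle construction under transverse hyperplane sections: pullback along $i$ gives a restriction map
\[ H^0(E_{k,m} V^*(\log X) \otimes \OO(-H)) \to H^0(E_{k,m} V_0^*(\log X_0) \otimes \OO(-H)) \]
compatible with the natural inclusion $J_k(\PP^{r-1} \setminus X_0) \hookrightarrow J_k(\PP^r \setminus X)$ of nonsingular jets. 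Extending $j$ to $\tilde j \in J_k(\PP^r \setminus X)$ produces a nonsingular jet satisfying $s(\tilde j) = (i^*s)(j) = 0$ for every section $s$, so $p \in \ev(S_{k,m})$ and $(p,X) \in Z_{r,d}$, exactly as in the proof of Theorem~\ref{thm-Hyperbol}.

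The main obstacle I anticipate is the log functoriality step. For the ordinary jet bundles in Theorem~\ref{thm-Hyperbol} this is standard, but the logarithmic version requires verifying that $(X_0, \PP^{r-1})$ is genuinely the pullback log pair of $(X, \PP^r)$ (which holds because transversality, available for a general hyperplane through $p$ by Bertini, forces $X \cap \PP^{r-1}$ to be a smooth divisor) and that the construction of $E_{k,m} V^*(\log X)$ commutes with this restriction. This pullback formalism is already used in~\cite{Dar, BD} in closely parallel contexts, so I expect no essential difficulty. Once it is in place, Theorem~\ref{thm-technicaltool} yields the claimed codimension bound immediately.
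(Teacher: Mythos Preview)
Your proposal is correct and matches the paper's approach: the paper does not literally cite Theorem~\ref{thm-technicaltool} but instead reruns its proof in the complement setting, which amounts to exactly your observation that the condition $p\in X$ plays no role in the Grassmannian argument. The log functoriality step you flag as the main obstacle is handled in the paper just as you suggest (a jet in $\PP^{r-1}\setminus X_0$ is already a jet in $\PP^r\setminus X$, and restriction of sections is compatible with this inclusion), so your caution there is well placed but not an issue.
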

\begin{proof}
The proof is very similar in spirit to the proof of Theorem \ref{thm-technicaltool}, but we give a new proof here for completeness. Let $(p, X_0)$ be a general point of a component of $Z_{r-c,d}$. We will find a family $\phi: \FF_{r-c} \to \PP^{r-c} \times U_{r-c,d} \setminus \xX_{r-c,d}$ with $\phi^{-1}(Z_{r-c})$ having codimension at least $c$ in $\FF_{r-c}$. 

Let $(p, X_1) \in \PP^{r-1} \times U_{r-1,d} \setminus \xX_{r-1,d}$ be very general so that $(p,X_1)$ is not in the closure of any component of $Z_{r-1,d}$. Let $(p,Y)$ be a hypersurface in some high dimensional projective space $\PP^N$ together with a point $p \in \PP^N \setminus Y$ such that $(p,X_0)$ and $(p,X_1)$ are both parameterized linear sections of $(p,Y)$. Let $\FF_{r-c}$ be the space of parameterized linear spaces in $\PP^N$ passing through $p$. Then we have a natural map $\phi: \FF_{r-c} \to \PP^{r-c} \times U_{r-c,d} \setminus \xX_{r-c,d}$ sending the parameterized $r-c$ plane $\Lambda$ to $\Lambda^* (p,Y)$. By construction, the image of $\phi$ will certainly contain the point $(p,X_0)$, and since $(p,X_1)$ is very general, $\phi^{-1}(Z_{r-1,d})$ will have codimension at least $1$ in $\FF_{r-1}$. Observe that if $(p,Y_0) \in Z_{r-c-1,d}$ is a linear section of $(p,Y_1)$, then $(p,Y_1) \in Z_{r-c,d}$, since if there is a nonsingular jet $j$ at $p$ such that all the sections of $E_{k,m} T_{\PP^{r-c-1}}^*(\log Y_0) \otimes \OO(-H)$ vanish on $j$, then certainly all the sections of $E_{k,m} T_{\PP^{r-c}}^*(\log Y_1) \otimes \OO(-H)$ will vanish on $j$. By repeated application of Corollary \ref{GrassmanCor}, we see that the codimension of $\phi^{-1}(Z_{r-c,d})$ in $\FF_{r-c,d}$ is at least $c$. This concludes the proof.
\end{proof}

It follows that if $Z_{n,d} \subset \PP^n \times U_{n,d} \setminus \xX_{n,d}$ has codimension at least $1$ for $d \geq d_n$, then a very general hypersurface complement in $\PP^n$ is Brody hyperbolic when $d \geq d_{2n-1}$. Using this together with Darondeau's result, we obtain the following corollary, weakening the bound a bit for brevity.

\begin{corollary}
Let $d \geq (10n)^2 (2n)^{2n}$. Then if $X \subset \PP^n$ is very general, the complement $\PP^n \setminus X$ is Brody hyperbolic.
\end{corollary}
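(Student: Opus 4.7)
The plan is to feed Darondeau's effective bound (Theorem \ref{thm-Darondeau}) into the propagation machinery of the preceding theorem and then close with a dimension count. Specifically, for $d \geq (10n)^2 (2n)^{2n} = (5 \cdot 2n)^2 (2n)^{2n}$, I would apply Theorem \ref{thm-Darondeau} at ambient dimension $2n$: it gives that $\ev(S_{k,m})$ has codimension at least $2$ in $\PP^{2n} \times U_{2n,d}$. Since $\xX_{2n,d}$ is closed, passing to the open subset $\PP^{2n} \times U_{2n,d} \setminus \xX_{2n,d}$ preserves codimension, so (interpreting $Z_{n,d}$ as the locus inside the complement where $p \in \ev(S_{k,m})$) $Z_{2n,d}$ has codimension at least $1$ there.

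Next I would invoke the preceding theorem with $r = 2n+1$, so that its hypothesis ``$Z_{r-1,d}$ has codimension at least $1$'' is exactly what the previous step supplies. Taking $c = n+1$, the conclusion yields that $Z_{n,d}$ has codimension at least $n+1$ in $\PP^n \times U_{n,d} \setminus \xX_{n,d}$.

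A straightforward dimension count finishes the argument. The ambient space has dimension $n + \dim U_{n,d}$, so $\dim Z_{n,d} \leq \dim U_{n,d} - 1$, and the projection $Z_{n,d} \to U_{n,d}$ cannot be dominant. Hence for very general $X \in U_{n,d}$ the fiber over $X$ is empty, so $\ev(S_{k,m}(X)) \cap (\PP^n \setminus X) = \emptyset$. Because every entire curve $\CC \to \PP^n \setminus X$ must have image in the closure of $\ev(S_{k,m}(X))$, no nonconstant entire curve can exist in $\PP^n \setminus X$, and the complement is Brody hyperbolic.

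There is no real obstacle here: the heavy lifting has been done in Theorem \ref{thm-Darondeau} and in the preceding propagation theorem, and the corollary reduces to a substitution followed by a dimension count. The only subtlety worth noting is that we use the slightly weakened starting dimension $n' = 2n$ (rather than the sharper $n' = 2n-1$ mentioned in the text) in order to state the bound cleanly as $(10n)^2 (2n)^{2n}$, as the remark ``weakening the bound a bit for brevity'' anticipates.
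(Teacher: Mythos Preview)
Your proposal is correct and follows the paper's intended approach: feed Darondeau's bound into the propagation theorem and finish with a dimension count. The only difference is cosmetic---the paper applies Darondeau at ambient dimension $2n-1$ (yielding $\codim Z_{n,d}\geq n$, so the fiber over very general $X$ is at most $0$-dimensional and entire curves are forced to be constant), and then relaxes the resulting bound $d_{2n-1}$ to $(10n)^2(2n)^{2n}$ for a cleaner statement; you instead apply Darondeau directly at dimension $2n$, obtaining $\codim Z_{n,d}\geq n+1$ and a non-dominant projection, which matches the stated bound on the nose and makes the final step slightly cleaner.
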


\section{$0$-cycles}
\label{sec-cycles}
Let $R_{\PP^1, X, p} = \{ q \in X | Nq \sim Np \text{ for some integer $N$} \}$, where the relation $\sim$ means Chow equivalent. The goal of this section is to prove all but the $2n - \sum_{i=1}^k (d_i - 1) = -1$ case of the following conjecture of Chen, Lewis and Sheng \cite{CLS}.

\begin{conjecture}
\label{mainConj}
Let $X \subset \PP^{n}$ be a very general complete intersection of multidegree $(d_1, \dots, d_k)$. Then for every $p \in X$, $\dim R_{\PP^1, X, p} \leq 2n -k - \sum_{i=1}^k d_i$. 
\end{conjecture}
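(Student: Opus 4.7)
The plan is to apply Theorem \ref{thm-technicaltool} to the loci
$Z^{(s)}_{n, \bd} = \{(p, X) \in \uU_{n, \bd} : \dim R_{\PP^1, X, p} \geq s\}$
for each integer $s \geq 0$, using Roitman's theorem as the base case. First, I check that $Z^{(s)}_{n, \bd}$ is a countable union of locally closed subvarieties of $\uU_{n, \bd}$: parameterize chains of rational curves on $X$ through $p$ by their numerical type, use the relative Chow scheme of such chains over $\uU_{n, \bd}$, and stratify by the dimension of the evaluation image. Second, I verify hypothesis 1 of Theorem \ref{thm-technicaltool}: if $(p, X) \in Z^{(s)}_{n, \bd}$ is a parameterized hyperplane section of $(p', X')$, then any chain of rational curves on $X$ through $p$ is automatically a chain on $X'$ through $p$ (since $X \subseteq X'$), so $R_{\PP^1, X, p} \subseteq R_{\PP^1, X', p}$ and $\dim R_{\PP^1, X', p} \geq s$, giving $(p', X') \in Z^{(s)}_{n+1, \bd}$.

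The base case comes from Roitman's theorem. Write $\beta(n, \bd) = 2n + k - \sum d_i$ for the conjecture bound in the notation of $\uU_{n, \bd}$ (ambient $\PP^{n+k}$), and set $n_R = \sum d_i - 2$ so that $\beta(n_R, \bd) = n_R + k - 2$ matches the $n-2$ in Roitman's statement. Roitman's result gives that $Z^{(\beta(n_R, \bd)+1)}_{n_R, \bd}$ has codimension at least $1$, and Theorem \ref{thm-technicaltool} propagates this to $\codim(Z^{(\beta(n_R, \bd)+1)}_{n, \bd}) \geq n_R + 1 - n$ for every $n \leq n_R$. To obtain tighter codimension bounds at smaller $n$ and for smaller $s$, I proceed by downward induction on $n$: once the conjecture is established at some $n' > n$, the locus $Z^{(\beta(n', \bd)+1)}_{n', \bd}$ has codimension at least $1$, providing a fresh base case from which Theorem \ref{thm-technicaltool} yields a codimension bound on $Z^{(\beta(n', \bd)+1)}_{n, \bd}$ at the lower dimension $n$.

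Finally, I convert these codimension bounds into the conjecture via a fiber-dimension argument: a component of $Z^{(s)}_{n, \bd}$ dominating the moduli of complete intersections must have generic fiber of dimension at least $s$, because the fiber over $X$ contains the full rational equivalence class $R_{\PP^1, X, p}$ of each of its points, and hence has codimension at most $n - s$ in $\uU_{n, \bd}$. Comparing this upper bound to the propagated lower bounds rules out dominant components of $Z^{(\beta(n, \bd)+1)}_{n, \bd}$, yielding $\dim R \leq \beta(n, \bd)$ in the regime $\beta \geq 0$ and the absence of Chow-equivalent pairs in the regime $\beta < -1$. The main obstacle, and the reason the borderline case $\beta(n, \bd) = -1$ is only resolved up to countable exceptions, is that in this case the codimension count only forces dominant components of $Z^{(1)}_{n, \bd}$ to have zero-dimensional fibers: this produces a countable union of bad points in each very general $X$ but cannot preclude their existence, which is why Theorem \ref{thm-chow} does not fully establish Conjecture \ref{mainConj} in this boundary case. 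A secondary technical difficulty is arranging the accumulated codimension bounds from the various base cases so that they combine to give exactly the sharp bound $\beta(n, \bd)$ rather than a weaker one.
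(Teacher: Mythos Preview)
Your plan shares the paper's three pillars --- Roitman as base case, Theorem~\ref{thm-technicaltool} to propagate codimension, and the equivalence-class fiber argument --- but you have introduced machinery that the paper avoids, and this extra machinery is exactly the source of your ``secondary technical difficulty.''

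The paper works with only two loci: $E_{n,\bd}=Z^{(1)}_{n,\bd}$ and $G_{n,\bd}$ (the locus where $R_{\PP^1,X,p}\neq\{p\}$). The point you are underusing is that Roitman's theorem already gives $\codim Z^{(1)}\geq 1$ at the appropriate base dimension, not merely $\codim Z^{(\beta(n_R)+1)}\geq 1$: for a very general complete intersection at that level, the \emph{very general} point has $\dim R=0$, so $E$ itself misses the generic point of $\uU$. Once you propagate $E=Z^{(1)}$ with Theorem~\ref{thm-technicaltool}, your own equivalence-class observation finishes the job in one stroke: since the fiber of $E$ over a very general $X$ is a union of full equivalence classes, every $p$ with $\dim R_{\PP^1,X,p}\geq 1$ has $R_{\PP^1,X,p}$ contained in that fiber, hence $\dim R_{\PP^1,X,p}\leq \dim X-\codim(E)$, which is exactly the conjectural bound. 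No induction over $s$, no ``fresh base cases,'' and the sharp bound falls out immediately.

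By contrast, your downward induction starting from $Z^{(\beta(n_R)+1)}$ only ever controls loci $Z^{(s)}$ with $s$ near $\beta(n_R)$, and the fiber argument applied to those yields $\dim R\leq \beta(n_R)$ at lower $n$, not $\dim R\leq \beta(n)$; the step ``once the conjecture is established at $n'$'' never actually fires with the improved threshold. So the difficulty you flag is real for your route but evaporates once you track $Z^{(1)}$ instead. For the $\beta<0$ regime the paper similarly propagates $G$ (using that a very general hyperplane section of a Calabi-Yau has no nontrivial equivalences at a very general point), and your treatment of the $\beta=-1$ boundary case matches the paper's conclusion.
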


Here, we adopt the convention that $\dim R_{\PP^1, X, p}$ is negative if $R_{\PP^1, X, p} = \{p \}$. Together with the main result of \cite{CLS}, this completely resolves Conjecture \ref{mainConj} in the case of hypersurfaces.

Chen, Lewis and Sheng consider the more general notion of $\Gamma$ equivalence, although we are unable to prove the $\Gamma$ equivalence version here. The special case $\sum_i d_i = n+1$ is a theorem of Roitman \cite{R1, R2} and the case $2n -k - \sum_{i=1}^k d_i \leq -2$ is a theorem of Chen, Lewis and Sheng \cite{CLS} building on work of Voisin \cite{voisinChow, V2, V3}, who proves the result only for hypersurfaces. Chen, Lewis and Sheng prove Conjecture \ref{mainConj} for hypersurfaces and for arbitrary $\Gamma$ in the boundary case $2n - k - \sum_{i=1}^k d_i = -1$ in \cite{CLS}. The case $2n - k - \sum_{i=1}^k d_i = -1$ appears to be the most difficult, and is the only one we cannot completely resolve with our technique.

We provide an independent proof of all but the $2n - k - \sum_{i=1}^k d_i = -1$ case of Conjecture \ref{mainConj}. Aside from Roitman's result, this is the first result we are aware of addressing the case $2n - k - \sum_{i=1}^k d_i \geq 0$. We rely on the result of Roitman in our proof, but not the results of Voisin \cite{voisinChow, V2} or Chen, Lewis, and Sheng \cite{CLS}.

Let $E_{n,\bd} \subset \uU_{n,\bd}$ be the set of $(p,X)$ such that $R_{\PP^1, X, p}$ has dimension at least $1$. Let $G_{n,\bd} \subset \uU_{n,\bd}$ be the set of $(p,X)$ such that $R_{\PP^1,X,p}$ is not equal to $\{p\}$. Both $E_{n,\bd}$ and $G_{n,\bd}$ are countable unions of closed subvarieties of $\uU_{n,\bd}$. When we talk about the codimension of $E_{n,\bd}$ or $G_{n,\bd}$ in $\uU_{n,\bd}$, we mean the minimum of the codimensions of each component. We prove Conjecture \ref{mainConj} by proving the following theorem.

\begin{theorem}
\label{mainThm}
The codimension of $E_{n,\bd}$ in $\uU_{n,\bd}$ is at least $-n + \sum_i d_i$ and the codimension of $G_{n,\bd}$ in $\uU_{n,\bd}$ is at least $-n-1 + \sum_i d_i$.
\end{theorem}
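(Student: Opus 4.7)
The plan is to apply Theorem~\ref{thm-technicaltool} separately to the loci $E_{n,\bd}$ and $G_{n,\bd}$ inside $\uU_{n,\bd}$. Both are countable unions of locally closed subvarieties: one stratifies by the numerical class of a 1-cycle (in the relative Hilbert scheme of the universal complete intersection) witnessing the relation $Nq \sim Np$ that defines $R_{\PP^1,X,p}$.

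Hypothesis (1) of Theorem~\ref{thm-technicaltool} is essentially formal. If $\Lambda \colon X \hookrightarrow X'$ exhibits $(p,X)$ as a parameterized hyperplane section of $(p',X')$, then $\Lambda$ is a proper closed immersion, so its Chow-theoretic pushforward preserves rational equivalence. Thus $Nq \sim Np$ on $X$ forces $N\Lambda(q) \sim N\Lambda(p)$ on $X'$, giving an inclusion $R_{\PP^1,X,p} \hookrightarrow R_{\PP^1,X',p'}$; heredity for both $E_{n,\bd}$ and $G_{n,\bd}$ is immediate.

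Hypothesis (2) is the substantive content. For $G$ I would take the base case at level $n_0 = \sum_i d_i - 2$, where (in the hypersurface range of the induction) the relevant $X$ is Calabi-Yau. Mumford's theorem on the infinite-dimensionality of $CH_0$ for varieties with $h^{n_0,0}>0$ then guarantees that a very general $(p,X)$ has $R_{\PP^1,X,p} = \{p\}$, so $\codim(G_{n_0,\bd}) \geq 1$ in $\uU_{n_0,\bd}$. For $E$ the base case is one level higher: I would use Roitman's theorem (the $\dim R \leq n-2$ case of Conjecture~\ref{mainConj}) to obtain the necessary codimension-one bound for $E$ at its base level, combining Roitman's pointwise orbit-dimension estimate with Mumford's generic triviality in the universal family.

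With both base cases established, Theorem~\ref{thm-technicaltool} (applied with $m = \sum_i d_i$ for $E$ and $m = \sum_i d_i - 1$ for $G$) yields the two codimension estimates claimed in Theorem~\ref{mainThm}. The hard step is the $E$ base case: Mumford's theorem alone only supplies the weaker $G$-type bound, and extracting the extra codimension for $E$ requires promoting Roitman's pointwise dimension bound into a genuine family statement about $\uU_{n,\bd}$ rather than a single-fiber assertion.
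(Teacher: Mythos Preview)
Your overall architecture is exactly the paper's: verify hypothesis~(1) of Theorem~\ref{thm-technicaltool} by Chow pushforward, establish base cases for $E$ and $G$, and let the Grassmannian induction run. The base levels $m=\sum_i d_i$ for $E$ and $m=\sum_i d_i-1$ for $G$ are also the right ones. But you have the two base cases reversed in both content and difficulty, and there is an indexing slip.

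First, the indexing: in the paper's Section~4 convention the complete intersection sits in $\PP^n$, so $K_X=\OO_X(\sum_i d_i - n - 1)$. The Calabi--Yau level is therefore $n=\sum_i d_i - 1$, not $\sum_i d_i - 2$; the level $n_0=\sum_i d_i - 2$ where you place the $G$ base case already has $K_X=\OO_X(1)$.

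More substantively, the $E$ base case is the \emph{easy} one: Roitman's theorem, together with the countability of the components of $R_{\PP^1,X,p}$, gives directly that for a very general Calabi--Yau $X$ and very general $p$ the set $R_{\PP^1,X,p}$ is at most countable, so $E_{\sum_i d_i-1,\bd}$ has codimension $\ge 1$. No promotion to a family statement or separate appeal to Mumford is needed.

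The $G$ base case is where an extra idea enters, and your invocation of ``Mumford's theorem on the infinite-dimensionality of $CH_0$'' is not enough. Infinite-dimensionality of $CH_0(X)$ does not by itself force $R_{\PP^1,X,p}=\{p\}$ for very general $p$ (it only rules out positive-dimensional orbits generically, which is the $E$-type statement); indeed Proposition~4.4 of the paper shows that on a very general Calabi--Yau hypersurface a general point \emph{is} rationally equivalent to another point. The paper instead obtains the $G$ base case from the $E$ base case by a hyperplane slice: once $R_{\PP^1,X_0,p}$ is countable on the Calabi--Yau $X_0$, a very general hyperplane through $p$ misses $R_{\PP^1,X_0,p}\setminus\{p\}$, and since $R_{\PP^1,X_0\cap H,p}\subset R_{\PP^1,X_0,p}\cap H$ by pushforward, one gets $R=\{p\}$ on the section. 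That slice argument is the missing step in your $G$ paragraph.
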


\begin{corollary}
Conjecture \ref{mainConj} holds for $2n- k - \sum_{i=1}^k d_i \neq -1$. In the special case $2n-k- \sum_{i=1}^k d_i = -1$, the space of $p \in X$ Chow-equivalent to some other point of $X$ has dimension $0$ (i.e., is a countable union of points) but might not be empty as Conjecture \ref{mainConj} predicts.
\end{corollary}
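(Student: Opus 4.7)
My plan is a direct dimension count extracting the corollary from Theorem \ref{mainThm}. Write $m = n - k$ for the dimension of $X$ in Conjecture \ref{mainConj}, matching the convention of Section \ref{sec-technique}, so that the projection $\pi_2 \colon \uU_{m, \bd} \to B$ has fibers of dimension $m$.

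First I would record the elementary transitivity observation: if $q \in R_{\PP^1, X, p}$, then $R_{\PP^1, X, q} = R_{\PP^1, X, p}$. Consequently every equivalence class of dimension $\geq 1$ lies in the fiber $E_{m, \bd} \cap \pi_2^{-1}([X])$, and every nontrivial class lies in $G_{m, \bd} \cap \pi_2^{-1}([X])$. So bounding $\dim R_{\PP^1, X, p}$ reduces to bounding these fibers over a very general $[X]$.

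Next I would translate Theorem \ref{mainThm} into fiber bounds. Since $E_{m, \bd}$ and $G_{m, \bd}$ are countable unions of closed subvarieties, for $[X]$ very general only components dominating $B$ contribute, and a dominant component $V$ has $\codim V \leq m$ and contributes a piece of dimension $m - \codim V$. Feeding in $\codim E \geq \sum d_i - m$ and $\codim G \geq \sum d_i - m - 1$ then yields
\[ \dim\bigl(E_{m, \bd} \cap \pi_2^{-1}([X])\bigr) \leq 2m - \sum d_i, \quad \dim\bigl(G_{m, \bd} \cap \pi_2^{-1}([X])\bigr) \leq 2m + 1 - \sum d_i, \]
with the convention that a negative right-hand side forces the fiber to be empty (since then $\codim > m$ and no component dominates $B$).

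Substituting $m = n - k$ handles the three cases of the corollary: (i) if $2n - k - \sum d_i \geq 0$, the $E$-bound gives $\dim R_{\PP^1, X, p} \leq 2n - 2k - \sum d_i \leq 2n - k - \sum d_i$, verifying Conjecture \ref{mainConj}; (ii) if $2n - k - \sum d_i < -1$, the $G$-bound is strictly negative and no two distinct points of $X$ are Chow equivalent; (iii) if $2n - k - \sum d_i = -1$, the $G$-bound becomes $\leq -k \leq 0$, so the set of points equivalent to another is at most a countable union of points. The argument is essentially mechanical once transitivity places the class inside the $E$- or $G$-fiber; the only care required is the notational translation between the conjecture's ambient dimension $n$ and the section's $m = n-k$, and there is no substantive obstacle anywhere.
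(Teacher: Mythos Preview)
Your approach is exactly the paper's: feed the codimension bounds of Theorem \ref{mainThm} into the fiber dimension of $\uU \to B$ to control the $E$- and $G$-loci inside a very general $X$, then use transitivity of rational equivalence (which the paper leaves implicit) to conclude that each class $R_{\PP^1,X,p}$ sits inside the relevant fiber. The only problem is your index translation. You are right that the paper's conventions clash: Section \ref{sec-technique} sets $\uU_{n,\bd}\subset\PP^{n+k}\times B$ (subscript $=$ variety dimension), but Section \ref{sec-cycles} silently switches to subscript $=$ ambient dimension. You can see this in the proof of Theorem \ref{mainThm}, where Roitman is invoked at subscript $\sum d_i-1$; that is the Calabi--Yau \emph{ambient} dimension, not the variety dimension $\sum d_i-1-k$. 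So Theorem \ref{mainThm} really says $\codim E_{n,\bd}\geq\sum d_i-n$ with $n$ the ambient dimension, and the fibers of $\uU_{n,\bd}\to B$ have dimension $n-k$, not $n$.

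When you set $m=n-k$ and quote $\codim E_{m,\bd}\geq\sum d_i-m$, you have overstated the bound by $k$: in your convention the theorem gives only $\codim E_{m,\bd}\geq\sum d_i-m-k$, and likewise $\codim G_{m,\bd}\geq\sum d_i-m-k-1$. With these corrected inputs your displayed fiber estimates become $2n-k-\sum d_i$ and $2n-k+1-\sum d_i$, matching the paper on the nose; in particular case (iii) yields $\leq 0$ rather than $\leq -k$, which is exactly why the corollary only claims a countable set of points and not emptiness. The repair is purely mechanical, but as written your inequalities for $\codim E$ and $\codim G$ are not what Theorem \ref{mainThm} provides, so the argument does not stand without this adjustment.
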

\begin{proof}
First we consider the case $2n - k - \sum_i d_i \geq 0$. Let $\pi_1: \uU_{n,\bd} \to B$ be the projection map. If $\pi_1|_{E_{n,\bd}}$ is not dominant, then the result holds trivially. Thus, we may assume that the very general fiber of $\pi_1|_{E_{n,\bd}}$ has dimension $n - k - \codim(E_{n,\bd} \subset \uU_{n,\bd})$. If the bound on $E_{n,\bd}$ from Theorem \ref{mainThm} holds, then the space of points $p$ of $X$ with positive-dimensional $R_{\PP^1,X,p}$ has dimension at most $2n - k - \sum_i d_i$, which implies Conjecture \ref{mainConj} in the case $2n - k - \sum_i d_i \geq 0$. 

Now we consider the situation for $2n - k - \sum_i d_i \leq -1$. Conjecture \ref{mainConj} states that $\pi_1|_{G_{n,\bd}}$ is not dominant for this range. By Theorem \ref{mainThm}, we see that the dimension of $G_{n,\bd}$ is less than the dimension of $B$ if $2n - k - \sum_i d_i \leq -2$, proving Conjecture \ref{mainConj}. In the case $2n - k-\sum_i d_i = -1$, the dimension of $G_{n,\bd}$ is at most that of $B$, which shows that there are at most finitely many points of $X$ equivalent to another point of $X$. This proves the result. 
\end{proof} 

Our technique would prove Conjecture \ref{mainConj} in all cases if we knew that $G_{n,\bd}$ had codimension $-n + \sum_{i=1}^k d_i$ in $\uU_{n,\bd}$. However, this is not true for Calabi-Yau hypersurfaces. 

\begin{proposition}
A general point of a very general Calabi-Yau hypersurface $X$ is rationally equivalent to at least one other point of the hypersurface.
\end{proposition}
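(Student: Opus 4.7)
The plan is to exploit the Calabi--Yau relation $d = n + 1$ by using tangent lines of maximal contact to produce, at each general $p \in X$, a family of rationally equivalent residual points on $X$, and then to argue that these residuals cover $X$. First, I would show that $\ell \cap X$ lies in a fixed class $z \in \operatorname{CH}_0(X)$ independent of the line $\ell \subset \PP^n$: since the Grassmannian $\gG(1,n)$ is rational, any two lines can be connected by a $\PP^1$-pencil $\{\ell_t\}$, whose sweep $S = \bigcup_t \ell_t$ is a ruled surface in $\PP^n$; then $C = S \cap X$ is a curve in $X$ fibered over $\PP^1$ with fibers $\ell_t \cap X$ of degree $n + 1$, and all fibers being linearly equivalent on $C$ push forward to rationally equivalent $0$-cycles on $X$.

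Next, I would count the lines through a general $p \in X$ of contact at least $n$ with $X$ at $p$. Parametrizing by $v \in \PP^{n-1}$ and expanding $F(p + tv) = \sum_{k \geq 1} t^k F_k(v)$, where $F_k$ is homogeneous of degree $k$ in $v$, the contact condition is $F_1(v) = \cdots = F_{n-1}(v) = 0$. By Bezout, these $n - 1$ equations of degrees $1, 2, \ldots, n-1$ on $\PP^{n-1}$ cut out $(n-1)!$ distinct directions, with no hyperflex among them, for very general $X$ and generic $p$. Because $d - n = 1$, each such tangent line $\ell_i$ satisfies $\ell_i \cap X = n\cdot p + q_i$ for a single residual point $q_i \in X$, and $n\cdot p + q_i \sim z$ in $\operatorname{CH}_0(X)$ by the first step. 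Hence $q_i \sim z - n\cdot p$ is independent of $i$, so all the $q_i$ are pairwise rationally equivalent as $0$-cycles on $X$. Two distinct tangent lines cannot share a residual (else both would contain $p$ and $q_i = q_j$ and thus coincide), so for $n \geq 3$ we obtain at least two distinct points $q_1, q_2 \in X$ that are rationally equivalent.

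Finally, I would show that the residual correspondence $\phi \colon X \dashrightarrow X$, $p \mapsto q_1(p)$ (for a chosen branch), is dominant. For a generic $r \in X$, preimages under $\phi$ correspond to lines through $r$ whose degree-$n$ residual polynomial has all roots equal, a codimension-$(n-1)$ condition on the $\PP^{n-1}$ of directions at $r$; by Bezout this cuts out a nonempty finite set. Consequently, every general $r \in X$ equals $q_1(p)$ for some $p$ and is therefore rationally equivalent to $q_2(p) \neq r$ on $X$, proving the proposition for $n \geq 3$.

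The principal technical checkpoints are verifying, for a very general $X$, the transversality and genericity of the Bezout counts: that the $(n-1)!$ tangent directions are distinct and non-hyperflex at a general $p$, and that $\phi$ has nonempty generic fiber. The case $n = 2$, where $(n-1)! = 1$ and $X$ is an elliptic curve, falls outside this construction and would be handled separately via the existence of torsion in the Jacobian of $X$.
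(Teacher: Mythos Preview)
Your approach is essentially the paper's: both exploit lines having $(d-1)$-fold contact with $X$ at a point $p$, observe that the residual points on two such lines through $p$ are rationally equivalent (since all line sections of $X$ represent the same $0$-cycle class), and then use that a general point of $X$ arises as such a residual. Your write-up supplies considerably more detail---the pencil argument for line-section equivalence, the Bezout count $(n-1)!$ for tangent directions, an explicit dominance argument, and a separate treatment of $n=2$---than the paper's one-paragraph sketch, but the underlying geometry is identical.
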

\begin{proof}
Let $X$ be a very general Calabi-Yau hypersurface. Then we claim that a general point of $X$ is Chow equivalent to another point of $X$. To see this, observe that any point $p$ of $X$ has finitely many lines meeting $X$ to order $d-1$ at $p$. Such a line meets $X$ in a single other point. Moreover, every point of $X$ has a line passing through it that meets $X$ at another point of $X$ with multiplicity $d-1$. Thus, let $q_1$ be a general point of $X$, let $\ell_1$ be a line through $p$ meeting $X$ at a second point, $p$ to order $d-1$, and $\ell_2$ be a different line meeting $X$ at $p$ to order $d-1$. Let $q_2$ be the residual intersections of $\ell_2$ with $X$. Then $q_1 \sim q_2$.
\end{proof}

\begin{proof}[Proof of Theorem \ref{mainThm}]
First consider the bound on $E_{n,\bd}$. By Roitman's Theorem plus the fact that $R_{\PP^1,X,p}$ is a countable union of closed varieties, we see that $E_{-1+\sum_{i=1}^k d_i,\bd}$ has codimension at least one in $\uU_{-1 + \sum_{i=1}^k d_i,\bd}$. We note that if $p \sim q$ as points of $Y$, and $Y \subset Y'$, then $p \sim q$ as points of $Y'$ as well. The rest of the result follows from Theorem \ref{thm-technicaltool} using $Z_{n,\bd} = E_{n,\bd}$.

Now consider $G_{n,\bd}$. From Roitman's theorem, it follows that a very general point of a Calabi-Yau complete intersection $X$ is equivalent to at most countably many other points of $X$. Thus, a very general hyperplane section of such an $X$ satisfies the property that the very general point is equivalent to no other points of $X$. From this, we see that $G_{-2+\sum_{i=1}^k d_i,\bd}$ has codimension at least $1$ in $\uU_{-2+\sum_{i=1}^k d_i,\bd}$. Together with Theorem \ref{thm-technicaltool}, this implies the result.
\end{proof}

\bibliographystyle{plain}

\end{document}